\documentclass[11pt]{article}
\usepackage[margin=1in]{geometry}
\usepackage{amsmath,amssymb,amsthm}
\usepackage{enumitem}
\usepackage{graphicx}
\usepackage[hidelinks]{hyperref}
\usepackage{microtype}

\newtheorem{theorem}{Theorem}
\newtheorem{lemma}[theorem]{Lemma}
\newtheorem{proposition}[theorem]{Proposition}
\newtheorem{corollary}[theorem]{Corollary}
\theoremstyle{definition}
\newtheorem{definition}[theorem]{Definition}
\theoremstyle{remark}
\newtheorem{remark}[theorem]{Remark}

\let\lg\relax
\DeclareMathOperator{\lg}{lg}
\DeclareMathOperator{\dc}{dc}
\DeclareMathOperator{\Lip}{Lip}

\newcommand{\Z}{\mathbb{Z}}
\newcommand{\Q}{\mathbb{Q}}
\newcommand{\R}{\mathbb{R}}
\newcommand{\norm}[1]{\lVert #1\rVert_\infty}
\newcommand{\CC}{\mathcal{C}}
\newcommand{\Dec}{\mathsf{Dec}}
\newcommand{\nInstOne}{260}
\newcommand{\maxRatioOne}{0.438}
\newcommand{\nInstTwo}{60}
\newcommand{\maxRatioTwo}{0.182}
\newcommand{\nInstThree}{20}
\newcommand{\maxRatioThree}{0.157}
\newcommand{\emaxOne}{2}
\newcommand{\eboundOne}{12}
\newcommand{\emaxTwo}{4}
\newcommand{\eboundTwo}{48}
\newcommand{\emaxThree}{16}
\newcommand{\eboundThree}{192}
\newcommand{\emaxFour}{48}
\newcommand{\eboundFour}{768}
\newcommand{\rampMax}{0.2785}
\newcommand{\rampArg}{1.278}
\newcommand{\rampWorst}{0.2785}
\newcommand{\bitsPerRatioMin}{8.1}

\newcommand{\maxUbFrac}{0.311}
\newcommand{\minLbRatio}{129.1}

\newcommand{\slopeOne}{0.90}
\newcommand{\slopePlainOne}{1.08}
\newcommand{\slopeTwo}{0.92}
\newcommand{\bitsPerATwo}{116.8}
\newcommand{\maxATwo}{631}
\newcommand{\slopeThree}{0.85}
\newcommand{\bitsPerAThree}{673.9}
\newcommand{\maxAThree}{364}
\newcommand{\plainPerRatioMax}{448}
\newcommand{\maxRatioL}{4\,000}

\title{Description Complexity Bounds for\\ Analytic Approximations of Lipschitz Functions}
\author{A.~A.~Vozmishchev\\[2pt]
\small Faculty of Applied Informatics, ITMO University, St.~Petersburg, 197101, Russia}
\date{October 2026}

\begin{document}
\maketitle

\begin{abstract}
We study how many bits are needed to write down an elementary analytic expression that uniformly approximates a Lipschitz function. Expressions are finite trees over the basis $\{+,-,\times,\div,\exp\}$ with the variables and rational constants, and their size is $\dc(f)$, the length of the shortest description of $f$ in a fixed binary description language. For functions of one variable we prove that every $L$-Lipschitz $F:[0,1]\to[0,1]$ has an everywhere-defined approximant of accuracy $\Delta$ with $\dc(f)\le C\,(L/\Delta+\lg(1/\Delta+2))$ for all $L,\Delta>0$, by a linear spline smoothed with logistic ramps in a description language with one repetition construct. We prove a matching lower bound valid for \emph{every} description language: for $\Delta\le 1/36$ and every $L>0$ some $L$-Lipschitz function needs at least $\tfrac1{16}(L/\Delta+\lg(1/\Delta+2))$ bits. Hence the worst-case description complexity is $\Theta(L/\Delta+\lg(1/\Delta+2))$. For functions on $[0,1]^d$ the same method gives $\Theta_d\bigl((L/\Delta)^d+\lg(1/\Delta+2)\bigr)$, using a tensor-product spline whose coefficients are mixed second differences and cost $O(d)$ bits each. Without the repetition construct the upper bounds carry an additional logarithmic factor, whose necessity for plain expression trees is left open. A reference encoder, decoder and evaluator confirm the constructions numerically, and figures and tables report the checks.
\end{abstract}

\noindent\textbf{Keywords:} analytic approximation, description complexity, Lipschitz functions, metric entropy, lower bounds, elementary functions.

\section{Introduction}\label{sec:intro}

How complex must a closed-form expression be to represent a given continuous function to a prescribed accuracy? Classical results such as the Stone--Weierstrass theorem and Jackson's theorems~\cite{Jackson1930} show that polynomial approximants exist and give their degrees. They treat the number and precision of the coefficients as free. Here we charge for them. We measure an expression by the number of bits needed to describe it (Section~\ref{sec:framework}).

Fix $L,\Delta>0$ and let $F:[0,1]\to[0,1]$ be $L$-Lipschitz. What is the least description complexity of an expression $f$ over
\[
E=\{+,-,\times,\div,\exp,x,c\},\qquad c\in\Q,
\]
with $\norm{F-f}\le\Delta$? Write $\CC(L,\Delta)$ for the worst case over all such $F$ (Definition~\ref{def:CC}).

\paragraph{Contributions.} Logarithms $\lg$ are binary throughout.
\begin{enumerate}[label=(\roman*),leftmargin=*]
\item \emph{Upper bound} (Theorem~\ref{thm:upper}). For all $L,\Delta>0$, $\CC(L,\Delta)\le 435\,L/\Delta+12\lg(1/\Delta+2)$. The approximant is a linear spline smoothed by logistic ramps, and it is defined on all of $\mathbb R$.
\item \emph{Lower bound} (Theorem~\ref{thm:lower}, Proposition~\ref{prop:lowerlog}). For $\Delta\le 1/36$ and all $L>0$, $\CC(L,\Delta)\ge\frac1{16}\bigl(L/\Delta+\lg(1/\Delta+2)\bigr)$. This depends only on counting bit strings, so it holds for every description language.
\item \emph{Two-sided bound} (Corollary~\ref{cor:theta}). Together, $\CC(L,\Delta)=\Theta\bigl(L/\Delta+\lg(1/\Delta+2)\bigr)$. In particular the additive $\lg(1/\Delta)$ term is necessary.
\item \emph{Plain trees} (Proposition~\ref{prop:plain}). The upper bound uses a description language with one repetition construct (a loop over an arithmetic progression of breakpoints). Without it, the same construction costs $O((L/\Delta)\lg(L/\Delta+2)+\lg(1/\Delta+2))$. Whether the factor $\lg(L/\Delta)$ is necessary for plain trees is open (Section~\ref{sec:open}).
\item \emph{Several variables} (Theorem~\ref{thm:mainD}). For $F:[0,1]^d\to[0,1]$, $L$-Lipschitz in the sup norm, the worst-case complexity is $\Theta_d\bigl((L/\Delta)^d+\lg(1/\Delta+2)\bigr)$: the exponent $d$ is sharp and the constants grow exponentially in $d$.
\item \emph{Numerical verification} (Section~\ref{sec:numerics}). A reference encoder, independent decoder and evaluator for the description language confirm that the constructions decode to the claimed formulas, meet the error guarantees, and have the predicted description lengths.
\end{enumerate}
Both ingredients of the proofs are elementary: smoothed spline interpolation, and a packing/pigeonhole argument. The contribution is a precise complexity measure in which they match, together with an explicit account of how the answer depends on the description language.

\paragraph{Related work.} Kolmogorov and Tikhomirov determined the metric entropy of Lipschitz classes~\cite{KT1959}; see also~\cite{LGM1996}. Our lower bound is an elementary packing argument for that entropy that makes the admissible range of $\Delta$ explicit. The upper bound is linear-spline interpolation in which the ReLU ramp $\max\{0,z\}$ is replaced by the smooth ramp $z\,\sigma(z/\gamma)$, i.e.\ a scaled sigmoid-weighted linear unit (SiLU/``swish'')~\cite{EUD2018,RZL2017}. Approximation of Lipschitz and Sobolev classes by neural networks with explicit size bounds is well studied~\cite{Yarotsky2017}, and VC-dimension arguments give matching lower bounds on network size~\cite{BHLM2019}. Yarotsky and Zhevnerchuk~\cite{YZ2020} map out the trade-offs between the number of weights, depth, activation function and weight precision, and show that the total information $\epsilon^{-d/r}$ is split differently in different regimes; Yarotsky~\cite{Y2021} shows that fixed-size networks with certain elementary activations can approximate every continuous function by exploiting unbounded weight precision, while standard activations cannot (by Khovanskii's bounds~\cite{Khovanskii1991}). Those results count weights or neurons; we count bits. Bounds in terms of the number of nodes of a formula with $\exp$ would need pseudo-dimension bounds for Pfaffian formulas~\cite{KM1997}; we make no claim in that direction (Remark~\ref{rem:nodes}). Computability of real functions is treated in~\cite{Weihrauch2000}, and description length and Kolmogorov complexity in~\cite{LV2019}. Integers are encoded with Elias' $\gamma$-code~\cite{Elias1975}.

\section{Formal Framework}\label{sec:framework}

\paragraph{Notation.} $\lg=\log_2$; $\exp$ is the natural exponential; $\lceil\cdot\rceil$ is the ceiling and $\lfloor\cdot\rceil$ the nearest integer. For $L>0$, $\Lip_L$ is the class of $F:[0,1]\to[0,1]$ with $|F(x)-F(y)|\le L|x-y|$. For $\Delta>0$ put $\Lambda_\Delta:=\lg(1/\Delta+2)$.

\subsection{Formulas}

\begin{definition}[Formulas]\label{def:formula}
A \emph{formula} is a finite ordered rooted tree whose internal nodes are labelled $+,-,\times,\div$ (two children) or $\exp$ (one child), and whose leaves are labelled by the variable $x$ or by a constant $r\in\Q$. It denotes a partial function of $x\in\R$ in the obvious way. There is no unary minus; $-t$ is written $0-t$. A formula is \emph{admissible} if every $\div$-node has a nonzero denominator at every $x\in[0,1]$. Admissible formulas denote continuous functions $[0,1]\to\R$. For $F:[0,1]\to\R$ and a formula $f$ we set $\norm{F-f}:=\sup_{x\in[0,1]}|F(x)-f(x)|$ if $f$ is admissible and $\norm{F-f}:=+\infty$ otherwise.
\end{definition}

\subsection{Descriptions and description complexity}

The measure $\dc$ is defined through an explicit decoder. The lower bounds use only that the decoder is a function on bit strings.

\begin{definition}[Description language]\label{def:language}
\emph{Symbols.} There are ten node symbols: $+,-,\times,\div,\exp,x,c,B,\iota,\kappa$. Each is written with a fixed distinct $4$-bit code (six of the sixteen codes are unused).

\emph{Integers.} For $n\ge1$, $\gamma(n)$ is Elias' $\gamma$-code: $\lfloor\lg n\rfloor$ zeros followed by the binary expansion of $n$. It has length $2\lfloor\lg n\rfloor+1\le 2\lg n+1$ and is prefix-free.

\emph{Rationals.} A rational $r=p/q'$ ($p\in\Z$, $q'\ge1$, not necessarily reduced) is written as one sign bit for $p$, then $\gamma(|p|+1)$, then $\gamma(q')$. Its length is
\[
\lambda(p/q')\le 3+2\lg(|p|+1)+2\lg q'.
\]
\emph{Descriptions.} Descriptions are bit strings generated by the following productions, each written in prefix order.
\begin{enumerate}[label=(D\arabic*),leftmargin=*,itemsep=1pt]
\item the symbol $x$;
\item the symbol $c$ followed by a rational;
\item the symbol $\exp$ followed by a description;
\item a symbol $\circ\in\{+,-,\times,\div\}$ followed by two descriptions;
\item (\emph{block}) the symbol $B$, then $\gamma(n)$ with $n\ge1$, two rationals $t_0$ (offset) and $\tau$ (step), then $\gamma(b)$ with $b\ge1$, then $n$ bit strings $w_1,\dots,w_n$ of length $b$ each, then a \emph{template} $T$. A template is a description generated by (D1)--(D4) in which the leaf symbols $\iota$ and $\kappa$ (with no payload) may additionally occur. Blocks may not be nested, and $\iota,\kappa$ may not occur outside a template.
\end{enumerate}
\emph{Decoder.} $\Dec(w)$ is the formula described by $w$, if $w$ is exactly one description, and is undefined otherwise. For (D1)--(D4) the meaning is the obvious tree. For a block, let $c_j:=(\text{the integer with binary expansion }w_j)-2^{b-1}$, so $c_j\in[-2^{b-1},2^{b-1}-1]$. Let $T_j$ be $T$ with every $\iota$ replaced by the constant $t_0+(j-1)\tau$ and every $\kappa$ by the constant $c_j$. The block denotes the formula $T_1+(T_2+(\cdots+(T_{n-1}+T_n)))$, which is $T_1$ if $n=1$.
\end{definition}

The first symbol of a description determines the production, rationals and integers are self-delimiting, and the $w_j$ have known length. Hence a bit string parses in at most one way, and $\Dec$ is a well-defined partial function.

\begin{definition}[Description complexity]\label{def:dc}
For a formula $f$, $\dc(f):=\min\{|w|:\Dec(w)=f\}$, where $|w|$ is the bit length of $w$. Every formula has a description that uses no block, so $\dc(f)<\infty$. We write $\dc_{\mathrm{plain}}(f)$ for the same quantity when (D5) is removed. Clearly $\dc\le\dc_{\mathrm{plain}}$.
\end{definition}

\begin{definition}[Worst-case complexity]\label{def:CC}
For $L,\Delta>0$ let
\[
\CC(L,\Delta):=\sup_{F\in\Lip_L}\;\min\{\dc(f):\ \norm{F-f}\le\Delta\}.
\]
The inner minimum exists because $\dc$ is integer valued, and the set is nonempty by Theorem~\ref{thm:upper}. $\CC_{\mathrm{plain}}$ is defined likewise with $\dc_{\mathrm{plain}}$.
\end{definition}

\begin{remark}[Dependence on the language]\label{rem:language}
$\dc$ depends on $\Dec$. The block (D5) is a deliberately minimal repetition construct: a loop whose index runs over an arithmetic progression and whose coefficients come from a short list. Without it, a regular family of breakpoints $i/N$ costs $\Theta(\lg N)$ bits per breakpoint. The upper bounds in Section~\ref{sec:upper} are proved for the language of Definition~\ref{def:language}. The lower bounds in Section~\ref{sec:lower} hold for \emph{any} partial map $\Dec$ from bit strings to formulas, since they use only the number of strings of a given length.
\end{remark}

\begin{remark}[Node complexity]\label{rem:nodes}
One may also count the nodes $|f|$ of a formula, charging one unit per constant regardless of its precision. This is a different measure. Constants of unbounded bit-length make the counting argument of Section~\ref{sec:lower} inapplicable, and lower bounds in terms of $|f|$ would need finer tools such as pseudo-dimension bounds for formulas with $\exp$~\cite{Khovanskii1991,KM1997}. We make no claim about $|f|$.
\end{remark}

\section{Upper Bound}\label{sec:upper}

\begin{lemma}[Smooth ramp]\label{lem:ramp}
For $\gamma>0$ let $R_\gamma(z):=\dfrac{z}{1+e^{-z/\gamma}}$ for $z\in\R$. Then
\[
\bigl|\max\{0,z\}-R_\gamma(z)\bigr|\le\frac{\gamma}{3}\qquad\text{for all }z\in\R.
\]
\end{lemma}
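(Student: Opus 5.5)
The plan is to reduce the claim to a one-variable estimate by scaling and symmetry, and then to locate the maximum of that one-variable function explicitly. Write $\sigma(u):=1/(1+e^{-u})$, so that $R_\gamma(z)=z\,\sigma(z/\gamma)$. Substituting $z=\gamma u$ gives
\[
\max\{0,z\}-R_\gamma(z)=\gamma\bigl(\max\{0,u\}-u\,\sigma(u)\bigr).
\]
It therefore suffices to treat $\gamma=1$, that is, to show $|\max\{0,u\}-u\sigma(u)|\le 1/3$ for all $u\in\R$.

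\emph{Reduction by symmetry.} I will use the identity $1-\sigma(u)=\sigma(-u)$.
\begin{itemize}
\item For $u\ge0$ the difference is $u(1-\sigma(u))=u/(1+e^{u})$.
\item For $u<0$ it equals $-u\sigma(u)=|u|/(1+e^{|u|})$.
\end{itemize}
In both cases the absolute difference is $g(t):=t/(1+e^{t})$ with $t=|u|\ge0$. The lemma is thus equivalent to $\sup_{t\ge0}g(t)\le 1/3$.

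\emph{Maximizing $g$.} We have $g(0)=0$, $g>0$ on $(0,\infty)$, and $g(t)\to0$ as $t\to\infty$, so the supremum is attained at an interior critical point. Compute
\[
g'(t)=\frac{1+e^{t}-te^{t}}{(1+e^{t})^2}.
\]
Its sign is that of $h(t):=1+e^{-t}-t$, which is strictly decreasing, positive at $t=1$ and tends to $-\infty$. Hence there is a unique critical point $t^*$, with $t^*>1$, and it is the global maximum. At $t^*$ the equation $1+e^{t^*}=t^*e^{t^*}$ gives
\[
g(t^*)=e^{-t^*}=t^*-1.
\]
It remains to show $t^*<4/3$. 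Since $h$ is decreasing, this is equivalent to $h(4/3)<0$, i.e.\ $e^{-4/3}<1/3$, i.e.\ $e^{4}>27$. This holds because $e^4>2.7^4>53$. Hence $g(t^*)=t^*-1<1/3$, which proves the lemma. Numerically $t^*\approx1.2785$, so the maximum is about $0.2785$, consistent with the macros \texttt{\textbackslash rampArg} and \texttt{\textbackslash rampMax}.

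The only step needing care is pinning down $t^*$ without numerics. The identity $g(t^*)=t^*-1$ turns this into the single elementary inequality $e^4>27$, so I expect no real obstacle.
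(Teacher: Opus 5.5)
Your proof is correct. The reduction is the same as the paper's: scale by $z=\gamma u$, then use $1-\sigma(u)=\sigma(-u)$ to reduce to $\sup_{t\ge0} t/(1+e^{t})\le 1/3$. The difference lies in the one-variable estimate.

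The paper avoids calculus. It inserts $e^{v}\ge 1+v+v^2/2$ to get $\varphi(v)\le v/(2+v+v^2/2)$, and notes this is at most $1/3$ because $\tfrac12(v-2)^2\ge0$. Its majorant happens to peak at exactly $1/3$ (at $v=2$), which is where the constant in the lemma comes from.

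You instead locate the true maximizer through the sign of $h(t)=1+e^{-t}-t$. The critical-point identity $g(t^*)=e^{-t^*}=t^*-1$ then turns the bound into the single inequality $e^4>27$.

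Your route is a little longer but sharper. It gives the exact maximum $t^*-1=W(1/e)\approx0.2785$, the solution of $we^{w}=e^{-1}$. This is the value observed in Section~\ref{sec:numerics}, and it shows that $1/3$ is not tight. Any better constant follows just as easily by evaluating $h$ at a point closer to $t^*$.

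The paper needs only some $O(\gamma)$ bound, so its Taylor-inequality route is the more economical one there.
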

\begin{proof}
Put $u=z/\gamma$ and $\varphi(v)=v/(1+e^{v})$ for $v\ge0$. If $z\ge0$, then
\[
z-R_\gamma(z)=z\,\frac{e^{-u}}{1+e^{-u}}=\frac{z}{1+e^{u}}=\gamma\varphi(u).
\]
If $z<0$, then $-u>0$ and $|0-R_\gamma(z)|=|z|/(1+e^{-u})=\gamma\varphi(|u|)$. Since $e^v\ge1+v+v^2/2$ for $v\ge0$,
\[
\varphi(v)\le\frac{v}{2+v+v^2/2}\le\frac13,
\]
where the last inequality is equivalent to $\tfrac12(v-2)^2\ge0$.
\end{proof}

\begin{theorem}[Upper bound]\label{thm:upper}
Let $L,\Delta>0$ and $F\in\Lip_L$. There is an admissible formula $f$ with $\norm{F-f}\le\Delta$ and
\[
\dc(f)\le 435\,\frac{L}{\Delta}+12\,\Lambda_\Delta,\qquad \Lambda_\Delta=\lg\Bigl(\frac1\Delta+2\Bigr).
\]
Consequently $\CC(L,\Delta)\le 435\,L/\Delta+12\,\Lambda_\Delta$. The formula depends on $F$ only through the values $F(i/N)$ at the grid points of the proof.
\end{theorem}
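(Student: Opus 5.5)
We prove Theorem~\ref{thm:upper} by writing a piecewise-linear interpolant of $F$ on a uniform grid as a sum of ReLU ramps, replacing each ramp by $R_\gamma$ from Lemma~\ref{lem:ramp}, and encoding the resulting sum with a single block (D5).

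\textbf{Trivial regime.} If $L/\Delta$ is small, say $L\le 2\Delta$, we do not use a block at all. The constant $f\equiv F(0)$ rounded is already close enough: $\norm{F-F(0)}\le L\le 2\Delta$, so we take instead the midpoint of the range of $F$ on $[0,1]$, which is within $L/2\le\Delta$ of $F$. Rounding it to a dyadic with denominator about $1/\Delta$ costs $O(\Lambda_\Delta)$ bits, with error budget shared accordingly. We adjust the threshold so the constants fit.

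\textbf{Main regime.} Choose $N=\lceil cL/\Delta\rceil$ with $c$ a small absolute constant, and put $h=1/N$. Let $y_i$ be $F(i/N)$ rounded to the grid $\delta\Z$, where $\delta=h\,\eta$ for a fixed fraction $\eta$ of $\Delta/L$. Rounding to multiples of $\delta$ keeps the slopes $s_i=(y_i-y_{i-1})/h$ in $\delta/h\cdot\Z=\eta'\Z$. The piecewise-linear interpolant is
\[
S(x)=y_0+s_1x+\sum_{i=1}^{N-1}(s_{i+1}-s_i)\max\{0,x-i/N\}.
\]
Two facts drive the construction:
\begin{itemize}
\item The Lipschitz condition gives $|s_i|\le L+2\delta/h$, so the integers $k_i=(s_{i+1}-s_i)/\eta'$ are bounded by an absolute constant $K$ once $\eta'\asymp L$. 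Each coefficient therefore fits in $b=O(1)$ bits.
\item Interpolation error plus rounding error is at most $Lh/2+\delta\le\Delta/2$ after fixing $c$.
\end{itemize}

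\textbf{Smoothing.} Replace each ramp by $R_\gamma(x-i/N)$. By Lemma~\ref{lem:ramp} the total error is at most $(\gamma/3)\sum|s_{i+1}-s_i|\le(\gamma/3)N\cdot K\eta'$. Choosing $\gamma\asymp\Delta/(NL)$ makes this at most $\Delta/2$. Writing $\eta'$ and $\gamma$ as rationals with power-of-two-friendly denominators costs $O(\lg N+\Lambda_\Delta)$ bits.

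\textbf{Encoding.} The block has $n=N-1$, offset $t_0=1/N$, step $\tau=1/N$ and width $b$. The template is
\[
T = (\eta'\cdot\kappa)\times\frac{x-\iota}{1+\exp\bigl((\iota-x)/\gamma\bigr)}.
\]
It is built from (D1)--(D4) and is admissible, since the denominator is positive everywhere. The whole formula is $(y_0+s_1x)+\text{block}$. Counting bits:
\begin{itemize}
\item the block contributes $4+\gamma(n)+\lambda(t_0)+\lambda(\tau)+\gamma(b)+nb+|T|$;
\item $|T|$ is $O(\lg N+\Lambda_\Delta)$, through the constants $\eta'$ and $\gamma$;
\item $y_0$ and $s_1$ cost $O(\Lambda_\Delta+\lg N)$.
\end{itemize}
Since $\lg N\le\lg(L/\Delta+2)$, and $\lg(L/\Delta)$ is $O(L/\Delta)$ when $L/\Delta\ge 1$, the total is $O(L/\Delta)+O(\Lambda_\Delta)$. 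The dependence on $F$ is only through the values $F(i/N)$, as claimed.

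\textbf{Main obstacle.} The real work is the bookkeeping that pins down the explicit constants $435$ and $12$. We must choose $c$, $\eta$, $\gamma$ and $b$ concretely, powers of two being convenient, and bound every $\lambda(\cdot)$ by the formula in Definition~\ref{def:language}. We must also absorb the $\lg N$ terms into the linear term, using $2\lg(N+1)\le$ a constant times $N$. A second subtlety is that the constants' magnitudes involve $L$ as well as $\Delta$. We handle this by pulling $L$ into $\eta'$ together with the denominators, so that only $\lg N$ and $\lg(1/\Delta)$ appear.
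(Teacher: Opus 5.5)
Your construction is the paper's: the linear interpolant written as $y_0+s_1x+\sum_i(s_{i+1}-s_i)\max\{0,x-i/N\}$, with slope changes that are bounded integer multiples of one rational, each ramp replaced by $R_\gamma$ with $\gamma\asymp N^{-2}$, and a single block with $n=N-1$, $t_0=\tau=1/N$ and $O(1)$-bit words. The gap is that you stop at $\dc(f)=O(L/\Delta)+O(\Lambda_\Delta)$ and defer the parameter choices and the bit count. The constants $435$ and $12$ are part of the statement, so they are not proved.

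The parameters also need repair before any count is possible. Read literally, taking $\eta$ to be a fraction of $\Delta/L$ gives $\delta=h\eta\asymp\Delta^2/L^2$ and $\eta'=\delta/h\asymp\Delta/L$. Then the $k_i$ are of size $L^2/\Delta$, not bounded. What you need, as your later condition $\eta'\asymp L$ says, is a rounding step $\delta\asymp\Delta$. Also $c$ is not ``small'': it must be large enough (about $2$) that $Lh/2\le\Delta/4$. The paper takes $q=\lceil2/\Delta\rceil$, $\delta=1/q$ and $N=\lceil qL\rceil$. Then $\eta'=N/q$ is a genuine short rational, $u_i\in\{-2,\dots,2\}$, $e_i\in\{-4,\dots,4\}$, $b=4$, and $\gamma=2^{-k}$ with $k=2\lceil\lg N\rceil+2$. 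Your threshold $L\le2\Delta$ leaves no room for rounding the constant. The paper uses $L\le\Delta/2$, treats $\Delta\ge\frac12$ separately, and ensures $N\ge2$ so that $n=N-1\ge1$.

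The count is not pure routine, because the coefficient of $\Lambda_\Delta$ cannot be absorbed into the linear term. In the main regime $L/\Delta$ may stay bounded while $\Delta\to0$, so every constant with denominator $q\asymp1/\Delta$ must be charged exactly. Your layout is $(y_0+s_1x)+\text{block}$ with $\eta'$ inside the template. There, three constants carry this denominator: $y_0=p_0/q$, $s_1=u_0N/q$ and $\eta'=N/q$, about $8\lg(q+1)$ bits in total. With the estimate $\lg(q+1)\le2\Lambda_\Delta$ used in the paper, this gives $16\Lambda_\Delta$, not $12\Lambda_\Delta$.

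You have two ways out. One is the sharper bound $\lg(q+1)<\Lambda_\Delta+1$, valid since $q+1<2(1/\Delta+2)$, combined with $\Lambda_\Delta>2$. The other is the paper's factoring $f=p_0/q+(N/q)\bigl(u_0x+\text{block}\bigr)$. Here the first slope is the integer $u_0$, and $\lg q$ occurs only in $p_0/q$ and $N/q$, for a total of $6\lg(q+1)\le12\Lambda_\Delta$. The remaining $4N+12\lg(N+1)+142$ is then at most $87N<435\,L/\Delta$, using $N\ge2$ and $N<5L/\Delta$.
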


\begin{proof}
Put $q:=\lceil2/\Delta\rceil$. All rounding is to multiples of $1/q$.

\emph{Case 1: $\Delta\ge1/2$.} The constant $1/2$ satisfies $|F-\tfrac12|\le\tfrac12\le\Delta$. Its description is the symbol $c$ followed by $\lambda(1/2)=1+3+3=7$ bits, so $\dc=11\le 11\Lambda_\Delta$ because $\Lambda_\Delta>1$.

\emph{Case 2: $\Delta<1/2$ and $L\le\Delta/2$.} Since $F$ is $L$-Lipschitz on an interval of length $1$, $\max F-\min F\le L$. Let $m=\tfrac12(\max F+\min F)\in[0,1]$ and $c=\lfloor qm\rceil/q=p/q$ with $0\le p\le q$. Then
\[
\norm{F-c}\le \frac L2+\frac1{2q}\le\frac\Delta4+\frac\Delta4<\Delta.
\]
The description has length $4+\lambda(p/q)\le 7+4\lg(q+1)$. As $\Delta<\frac12$, $q<\frac2\Delta+1<\frac5{2\Delta}$, so $q+1\le 3/\Delta\le(1/\Delta+2)^2$ and $\lg(q+1)\le2\Lambda_\Delta$. Also $\Lambda_\Delta>2$. So $\dc\le 7+8\Lambda_\Delta\le12\Lambda_\Delta$.

\emph{Case 3: $\Delta<1/2$ and $L>\Delta/2$.} Then $qL\ge 2L/\Delta>1$. Set
\[
N:=\lceil qL\rceil\ \ (\ge2),\qquad x_i:=\frac iN\ (0\le i\le N).
\]
Then $N\ge qL$. Since $qL>1$ and $q<5/(2\Delta)$, also $N<qL+1<2qL<5L/\Delta$.

\emph{Step 1 (rounded node values).} Let $p_i:=\lfloor qF(x_i)\rceil\in\{0,\dots,q\}$ and $y_i:=p_i/q$. Then $|y_i-F(x_i)|\le\frac1{2q}\le\frac\Delta4$.

\emph{Step 2 (spline error).} Let $S$ be the continuous piecewise linear function with $S(x_i)=y_i$, and $S_F$ the one with $S_F(x_i)=F(x_i)$. For $x\in[x_i,x_{i+1}]$ write $t=(x-x_i)N\in[0,1]$. Then $F(x)-S_F(x)=(1-t)(F(x)-F(x_i))+t(F(x)-F(x_{i+1}))$, so
\[
|F(x)-S_F(x)|\le (1-t)\frac{Lt}{N}+t\frac{L(1-t)}{N}=\frac{2Lt(1-t)}N\le\frac{L}{2N}\le\frac1{2q}\le\frac\Delta4,
\]
using $N\ge qL$. On each cell $S-S_F$ is a convex combination of the node errors, so $\norm{S-S_F}\le\frac\Delta4$. Hence $\norm{F-S}\le\frac\Delta2$.

\emph{Step 3 (integer slopes).} Let $u_i:=p_{i+1}-p_i\in\Z$ for $0\le i\le N-1$. The slope of $S$ on $[x_i,x_{i+1}]$ is $s_i=N(y_{i+1}-y_i)=\frac Nq u_i$. Since $|y_{i+1}-y_i|\le|F(x_{i+1})-F(x_i)|+\frac1q\le\frac LN+\frac1q$,
\[
|u_i|\le \frac{qL}{N}+1\le2,\qquad\text{so } u_i\in\{-2,\dots,2\}.
\]
For $1\le i\le N-1$ the slope changes are $m_i:=s_i-s_{i-1}=\frac Nq e_i$ with integers $e_i:=u_i-u_{i-1}\in\{-4,\dots,4\}$. On each cell $S$ has slope $s_j=s_0+\sum_{i\le j}m_i$ and $S(0)=y_0$, so
\begin{equation}\label{eq:S}
S(x)=y_0+s_0x+\sum_{i=1}^{N-1}m_i\max\{0,x-x_i\}.
\end{equation}
Indeed the right side is continuous, piecewise linear, has breakpoints in $\{x_i\}$, takes the value $y_0$ at $0$, and has slope $s_0+\sum_{i\le j}m_i=s_j$ on $(x_j,x_{j+1})$.

\emph{Step 4 (smoothing).} Let $k:=2\lceil\lg N\rceil+2$ and $\gamma:=2^{-k}$. Then $\gamma\le1/(4N^2)$. Define
\begin{equation}\label{eq:f}
f(x):=\frac{p_0}q+\frac Nq\Bigl(u_0\,x+\sum_{i=1}^{N-1}e_i\,R_\gamma\bigl(x-\tfrac iN\bigr)\Bigr).
\end{equation}
This replaces $\max\{0,\cdot\}$ in~\eqref{eq:S} by $R_\gamma$. By Lemma~\ref{lem:ramp} and $|m_i|\le 4N/q$,
\[
\norm{S-f}\le\sum_{i=1}^{N-1}|m_i|\frac\gamma3<\frac{4N^2\gamma}{3q}\le\frac1{3q}\le\frac\Delta6 .
\]

\emph{Step 5 (total error).} $\norm{F-f}\le\frac\Delta2+\frac\Delta6<\Delta$.

\emph{Step 6 (the formula over $E$, admissibility).} Since $1/\gamma=2^k$, we have $R_\gamma(z)=z\div\bigl(1+\exp(0-(z\times 2^k))\bigr)$. Each $\div$ has denominator $\ge1$ and the $\exp$ argument is bounded on $[0,1]$, so $f$ is admissible (indeed defined on all of $\R$).

\emph{Step 7 (description length).} Describe $f$ as
\[
+\ \ c(\tfrac{p_0}q)\ \ \times\ \ c(\tfrac Nq)\ \ +\ \ \times\ \ c(u_0)\ \ x\ \ B(n{=}N{-}1;\ t_0{=}\tau{=}\tfrac1N;\ b{=}4;\ w_j{=}e_j{+}8;\ T),
\]
where the template is
\[
T=\ \kappa\times\Bigl((x-\iota)\div\bigl(1+\exp\bigl(0-((x-\iota)\times 2^k)\bigr)\bigr)\Bigr).
\]
Here $b=4$ gives the range $[-8,7]\supset[-4,4]$, and $T_j=e_j\,R_\gamma(x-j/N)$ since $t_0+(j-1)\tau=j/N$, so the block expands to the sum in~\eqref{eq:f}. The costs are as follows.
\begin{itemize}[itemsep=1pt]
\item The 9 symbols outside the block header and template ($+,c,\times,c,+,\times,c,x,B$) cost $36$ bits.
\item Constants: $\lambda(p_0/q)\le3+4\lg(q+1)$ since $p_0\le q$; $\lambda(N/q)\le3+2\lg(N+1)+2\lg q$; $\lambda(u_0)\le5$.
\item Block header: $\gamma(N-1)\le2\lg N+1$; $\lambda(a)=\lambda(d)\le5+2\lg N$ each; $\gamma(4)$ has $5$ bits.
\item Coefficients: $4(N-1)$ bits.
\item Template: $16$ symbols ($64$ bits), plus constants $1$, $0$ and $2^k$ costing $5$, $3$ and $2k+3$ bits. Since $k\le 2\lg N+4$, the total is $\le 83+4\lg N$.
\end{itemize}
Summing, and using $2\lg(N+1)+10\lg N\le12\lg(N+1)$ and $4\lg(q+1)+2\lg q\le6\lg(q+1)$,
\[
\dc(f)\le 4N+12\lg(N+1)+6\lg(q+1)+142 .
\]
Now $\lg(N+1)\le N$ and $142\le 71N$ (as $N\ge2$), so the first three terms are at most $87N<435\,L/\Delta$. Also $6\lg(q+1)\le12\Lambda_\Delta$ as in Case~2. This gives the claim in Case 3, and Cases 1 and 2 are bounded by $12\Lambda_\Delta$.
\end{proof}

\begin{proposition}[Plain trees]\label{prop:plain}
Under the same hypotheses, the formula $f$ of Theorem~\ref{thm:upper} satisfies
\[
\dc_{\mathrm{plain}}(f)=O\!\Bigl(\frac L\Delta\lg\Bigl(\frac L\Delta+2\Bigr)+\Lambda_\Delta\Bigr).
\]
\end{proposition}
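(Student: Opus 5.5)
The plan is to take the formula $f$ of~\eqref{eq:f}, with the constants fixed in the proof of Theorem~\ref{thm:upper}, and write it out in full without the block (D5). Cases 1 and 2 of that proof use no block, so there $\dc_{\mathrm{plain}}(f)\le 12\Lambda_\Delta$ and nothing more is needed. Assume Case 3. Then $f$ is the outer part
\[
+\ c(\tfrac{p_0}q)\ \times\ c(\tfrac Nq)\ +\ \times\ c(u_0)\ x\ \Sigma,
\]
where $\Sigma$ is the right-nested sum $T_1+(T_2+(\cdots+T_{N-1}))$ with $T_j=e_j\times R_\gamma(x-j/N)$. This is exactly the tree the block denotes, so $\Dec$ of the plain description equals the same formula $f$. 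Hence the approximation error $\norm{F-f}<\Delta$ and admissibility carry over from Theorem~\ref{thm:upper} unchanged.

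The cost splits into three parts.

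\emph{Outer part.} It costs the same as in Step~7 minus the block header: at most $O(1)+O(\lg(N+1))+O(\lg(q+1))$ bits.

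\emph{Additions.} The $N-2$ symbols $+$ that join the $T_j$ cost $4(N-2)$ bits.

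\emph{Each summand $T_j$.} It uses the same 16 symbols as the template, so $64$ bits. Its constants are:
\begin{itemize}
\item $c(e_j)$, with $|e_j|\le4$, at most $7$ bits;
\item $1$ and $0$, costing $5$ and $3$ bits;
\item $2^k$, costing $2k+3\le 4\lg N+11$ bits;
\item $j/N$, appearing twice (once in $x-\iota$ outside, once inside the exponent), each costing $\lambda(j/N)\le 3+2\lg(j+1)+2\lg N\le 5+4\lg N$.
\end{itemize}
So $T_j$ costs $O(1)+O(\lg N)$ bits, and summing over $j$ gives $O(N\lg(N+1))$. Writing $1/N$-shifts as sums is not needed; direct constants suffice.

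\emph{Total.} This gives $\dc_{\mathrm{plain}}(f)=O(N\lg(N+1)+\lg(q+1))$. From the proof of Theorem~\ref{thm:upper}, $N<5L/\Delta$ and $\lg(q+1)\le2\Lambda_\Delta$. Since $\lg$ is increasing, $N\lg(N+1)\le (5L/\Delta)\lg(5L/\Delta+1)$, and $\lg(5y+1)\le \lg 5+\lg(y+2)=O(\lg(y+2))$ for $y\ge0$. This yields the claimed bound $O\bigl(\frac L\Delta\lg(\frac L\Delta+2)+\Lambda_\Delta\bigr)$.

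There is no real obstacle. The only point needing care is that the per-summand cost $\Theta(\lg N)$ comes from the breakpoint constants $j/N$ and from $2^k$, and not from $\Delta$ separately. This is what keeps $\lg(1/\Delta)$ from multiplying $N$. It holds because the template's constants depend only on $N$ and $k$, and $k=O(\lg N)$.
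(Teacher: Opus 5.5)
Your proposal is correct and follows the paper's proof essentially line by line. You write out the $N-1$ copies $T_j$ explicitly and charge $O(\lg N)$ bits per copy for the two breakpoint constants $j/N$ and for $2^k$ (using $k=O(\lg N)$), plus $4$ bits per joining $+$ and $O(\lg N+\lg q)$ for the remaining parts, then conclude with $N<5L/\Delta$ and $\lg(q+1)\le 2\Lambda_\Delta$. Your per-breakpoint bound $5+4\lg N$ is slightly looser than the available $3+4\lg N$ (since $j+1\le N$), but this does not affect the result.
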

\begin{proof}
Cases 1 and 2 are unchanged. In Case 3, write out the $N-1$ copies $T_j$ explicitly. Each costs $64$ bits for symbols, $2(3+4\lg N)$ bits for the two copies of the breakpoint $j/N$ (each $\lambda(j/N)\le3+4\lg N$), at most $7$ bits for $e_j$, $5+3$ bits for the constants $1,0$, and $2k+3\le 4\lg N+11$ bits for $2^k$. That is at most $96+12\lg N\le 108\lg N$ bits, using $N\ge2$. Joining the terms costs $4$ bits per $+$. The remaining parts cost $O(\lg N+\lg q)$ bits. Since $N<5L/\Delta$ and $q+1\le(1/\Delta+2)^2$, the total is $O(N\lg N+\Lambda_\Delta)$, which gives the claim.
\end{proof}

\section{Lower Bound}\label{sec:lower}

\begin{lemma}[Counting]\label{lem:count}
For every $M\ge0$, at most $2^{M+1}-1$ formulas $f$ satisfy $\dc(f)\le M$. The same holds for any partial map $\Dec$ from bit strings to formulas.
\end{lemma}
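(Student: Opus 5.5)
The proof is a direct counting argument that uses only the fact that $\Dec$ is a partial function. Fix $M\ge0$ and consider the map $f\mapsto w_f$ that sends each formula $f$ with $\dc(f)\le M$ to a shortest description, i.e.\ a bit string $w_f$ with $\Dec(w_f)=f$ and $|w_f|=\dc(f)\le M$. Such a $w_f$ exists because $\dc(f)$ is a minimum over a nonempty set of natural numbers (Definition~\ref{def:dc}).

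The key step is injectivity. If $w_f=w_g$, then $f=\Dec(w_f)=\Dec(w_g)=g$, because $\Dec$ assigns at most one formula to each string; this is the well-definedness of $\Dec$ noted after Definition~\ref{def:language}. Hence the number of formulas with $\dc(f)\le M$ is at most the number of bit strings of length at most $M$. That number is
\[
\sum_{\ell=0}^{M}2^\ell=2^{M+1}-1,
\]
where the empty string is counted for $\ell=0$. If $M$ is not an integer, the same bound holds with $\lfloor M\rfloor$ in place of $M$, which gives $2^{\lfloor M\rfloor+1}-1\le 2^{M+1}-1$.

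For the second sentence of the lemma, observe that the argument never used the grammar (D1)--(D5), the prefix-freeness of the codes, or any semantics of formulas. It used only that $\Dec$ is a partial map from bit strings to formulas, with $\dc$ defined as the minimum description length under that map. So the identical injection argument applies verbatim to any such $\Dec$.

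No step is a real obstacle. The only points needing care are allowing the empty string in the count, since this is what makes the bound $2^{M+1}-1$ rather than $2^{M+1}-2$, and noting that $\dc$ attains its minimum, so that $w_f$ exists.
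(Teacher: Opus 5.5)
Your proof is correct and is the same counting argument as the paper's: each formula with $\dc(f)\le M$ is $\Dec(w)$ for some bit string $w$ of length at most $M$, and there are $2^{M+1}-1$ such strings. Your explicit injection $f\mapsto w_f$ and your remarks on the empty string and non-integer $M$ only spell out details that the paper leaves implicit.
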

\begin{proof}
Each such $f$ equals $\Dec(w)$ for some bit string $w$ of length at most $M$, and there are $2^{M+1}-1$ such strings.
\end{proof}

\begin{lemma}[Separated families]\label{lem:pigeon}
Let $A\subseteq\Lip_L$ with $\norm{F-F'}>2\Delta$ for all distinct $F,F'\in A$, and suppose $|A|\ge2^n$ for an integer $n\ge1$. Then some $F\in A$ satisfies $\norm{F-f}>\Delta$ for every formula $f$ with $\dc(f)\le n-1$.
\end{lemma}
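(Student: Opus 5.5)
The proof is a pigeonhole argument combining Lemma~\ref{lem:count} with the separation hypothesis. Let $\mathcal F_{n-1}$ be the set of formulas $f$ with $\dc(f)\le n-1$. By Lemma~\ref{lem:count} with $M=n-1$, we have $|\mathcal F_{n-1}|\le 2^{n}-1<2^n\le|A|$. This counting step uses only that $\Dec$ is a partial function on bit strings, so the conclusion will hold for any description language, as Remark~\ref{rem:language} requires.

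Next we show that each formula $f$ can be $\Delta$-close to at most one member of $A$. Suppose $f\in\mathcal F_{n-1}$ satisfies $\norm{F-f}\le\Delta$ and $\norm{F'-f}\le\Delta$ for two members $F,F'\in A$. The value $\norm{\cdot}$ is finite, so $f$ is admissible (Definition~\ref{def:formula}), and $f$ then denotes a genuine function on $[0,1]$. The triangle inequality for the sup norm gives $\norm{F-F'}\le 2\Delta$. The hypothesis then forces $F=F'$. Hence the map sending each $F\in A$ that has some $\Delta$-close formula in $\mathcal F_{n-1}$ to such a formula (choose one) is injective into $\mathcal F_{n-1}$.

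Since $|A|>|\mathcal F_{n-1}|$, this map cannot be defined on all of $A$. Therefore some $F\in A$ has no $f\in\mathcal F_{n-1}$ with $\norm{F-f}\le\Delta$, which is exactly the claim.

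There is no real obstacle here. The only point needing care is the convention $\norm{F-f}=+\infty$ for non-admissible $f$. The triangle inequality is applied only after finiteness has been established, and non-admissible formulas are automatically far from every $F$, so they never need to be ruled out separately.
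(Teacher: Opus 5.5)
Your proof is correct and is essentially the paper's argument: the triangle inequality makes the assignment $F\mapsto f_F$ injective, and Lemma~\ref{lem:count} limits the formulas with $\dc\le n-1$ to $2^n-1<|A|$. The differences are cosmetic: you define the map only on the members of $A$ that have a $\Delta$-close formula rather than arguing by contradiction, and you spell out that a finite $\norm{F-f}$ forces $f$ to be admissible before the triangle inequality is used, which the paper leaves implicit.
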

\begin{proof}
Suppose every $F\in A$ had a formula $f_F$ with $\dc(f_F)\le n-1$ and $\norm{F-f_F}\le\Delta$. If $f_F=f_{F'}$, the triangle inequality gives $\norm{F-F'}\le2\Delta$, so $F=F'$. Thus $F\mapsto f_F$ is injective, and $|A|\le 2^{n}-1$ by Lemma~\ref{lem:count}. This contradicts $|A|\ge2^n$.
\end{proof}

\begin{lemma}[Bump family]\label{lem:bump}
Let $0<\Delta<\frac12$ and $L\ge8\Delta$, and put $n:=\lceil L/(4\Delta)\rceil-1$. Then $n\ge L/(8\Delta)\ge 1$, and there is a family $\mathcal G\subseteq\Lip_L$ with $|\mathcal G|=2^n$ and $\norm{G-G'}>2\Delta$ for distinct members.
\end{lemma}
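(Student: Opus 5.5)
The plan is the standard packing construction: a baseline constant plus $n$ disjoint tent-shaped bumps, each switched on or off by one bit. First I check the arithmetic claim. Since $L\ge 8\Delta$ we have $L/(4\Delta)\ge 2$, so $n=\lceil L/(4\Delta)\rceil-1\ge L/(4\Delta)-1\ge L/(8\Delta)\ge1$. The middle inequality is equivalent to $L/(8\Delta)\ge1$.

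Next I partition $[0,1]$ into $n$ cells of width $h:=1/n$ and pick the bump height. Put $H:=Lh/2$, so a tent of height $H$ supported on a cell of width $h$ has slopes $\pm L$. The height must satisfy $H>2\Delta$ strictly, and this is where the ceiling matters. From $n\le\lceil L/(4\Delta)\rceil-1<L/(4\Delta)$ we get $h>4\Delta/L$, hence $H=Lh/2>2\Delta$. If $H>1$ I cap the height at $\min\{H,1\}$. This keeps the slopes at most $L$ and, since $\Delta<\tfrac12$, still gives height $>2\Delta$. So set $\eta:=\min\{Lh/2,1\}$, which satisfies $2\Delta<\eta\le 1$.

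For $\sigma\in\{0,1\}^n$ define
\[
G_\sigma(x):=\sum_{j=1}^{n}\sigma_j\,\phi_j(x),
\]
where $\phi_j$ is the trapezoid/tent supported on $[(j-1)h,jh]$ with peak value $\eta$ and slopes bounded by $L$, for instance
\[
\phi_j(x)=\min\bigl\{\eta,\;L\max\{0,\tfrac h2-|x-(j-\tfrac12)h|\}\bigr\}.
\]
I then verify three properties.

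\begin{enumerate}[label=(\roman*),leftmargin=*]
\item The range is correct: the $\phi_j$ have disjoint interiors of support, so $0\le G_\sigma\le\eta\le1$.
\item $G_\sigma$ is $L$-Lipschitz. Each $\phi_j$ is $L$-Lipschitz and vanishes at the cell endpoints, and a function that is $L$-Lipschitz on each cell of a partition and continuous is $L$-Lipschitz globally.
\item Distinct members are separated. If $\sigma\ne\sigma'$ differ in coordinate $j$, then at the cell midpoint $|G_\sigma-G_{\sigma'}|=\eta>2\Delta$.
\end{enumerate}

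Finally, $\sigma\mapsto G_\sigma$ is injective by (iii), so $|\mathcal G|=2^n$.

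The only delicate point is obtaining the strict separation $>2\Delta$ from the ceiling in the definition of $n$. This rests on the strict inequality $n<L/(4\Delta)$, which holds because $\lceil a\rceil-1<a$. The capping at height $1$ is harmless because $\Delta<\tfrac12$.
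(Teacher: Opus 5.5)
Your proof is correct and follows the paper's construction. You use the same $n$ and the same height $\eta=\min\{1,L/(2n)\}$, with $\eta>2\Delta$ coming from $n<L/(4\Delta)$ and $\Delta<\tfrac12$; your $2^n$ sums of disjointly supported bumps are separated at the cell midpoints, as in the paper. The only difference is cosmetic: you cap a full-width tent at $\eta$ (a trapezoid when $\eta=1$), whereas the paper uses the narrower tent $\max\{0,\eta-L|x-c_j|\}$ of half-width $\eta/L\le 1/(2n)$.
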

\begin{proof}
Since $L/(4\Delta)\ge2$ and $1\le L/(8\Delta)$, we get $n\ge L/(4\Delta)-1\ge L/(8\Delta)\ge1$. Moreover $n<L/(4\Delta)$ by definition. Let $h:=\min\{1,L/(2n)\}$. Then $h>2\Delta$, because $2\Delta<1$ and $L/(2n)>2\Delta$. Let $c_j=(2j-1)/(2n)$ and $\phi_j(x)=\max\{0,h-L|x-c_j|\}$ for $j=1,\dots,n$. Each $\phi_j$ has support of half-width $h/L\le 1/(2n)$, so the supports lie in the cells $[\frac{j-1}n,\frac jn]$ and overlap at most at endpoints. For $\omega\in\{0,1\}^n$ put $G_\omega=\sum_j\omega_j\phi_j$. Then $G_\omega$ is continuous, piecewise linear with slopes in $\{0,\pm L\}$, and takes values in $[0,h]\subseteq[0,1]$. So $G_\omega\in\Lip_L$. For $\omega\ne\omega'$ the functions differ at some $c_j$ by exactly $h$, and nowhere by more than $h$, so $\norm{G_\omega-G_{\omega'}}=h>2\Delta$.
\end{proof}

\begin{theorem}[Lower bound]\label{thm:lower}
Let $0<\Delta<\frac12$ and $L\ge8\Delta$. There is $F\in\Lip_L$ such that $\norm{F-f}>\Delta$ for every formula $f$ with $\dc(f)<L/(8\Delta)$. Hence $\CC(L,\Delta)\ge L/(8\Delta)$. This holds for every description language.
\end{theorem}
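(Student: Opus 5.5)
The plan is to combine Lemma~\ref{lem:bump} with Lemma~\ref{lem:pigeon}. Under the hypotheses $0<\Delta<\frac12$ and $L\ge8\Delta$, Lemma~\ref{lem:bump} supplies an integer $n:=\lceil L/(4\Delta)\rceil-1\ge L/(8\Delta)\ge1$. It also supplies a family $\mathcal G\subseteq\Lip_L$ of exactly $2^n$ functions, pairwise more than $2\Delta$ apart in $\norm{\cdot}$. These are exactly the hypotheses of Lemma~\ref{lem:pigeon} with $A=\mathcal G$. That lemma yields some $F\in\mathcal G$ with $\norm{F-f}>\Delta$ for every formula $f$ with $\dc(f)\le n-1$.

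Next I convert the threshold $\dc(f)\le n-1$ into the stated one, $\dc(f)<L/(8\Delta)$. Since $\dc$ is integer valued, $\dc(f)<L/(8\Delta)\le n$ forces $\dc(f)\le n-1$. So the $F$ produced above fails to be $\Delta$-approximated by any formula with $\dc(f)<L/(8\Delta)$, which is the first claim.

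For the bound on $\CC$, I read off Definition~\ref{def:CC}. For this $F$, every admissible $f$ with $\norm{F-f}\le\Delta$ has $\dc(f)\ge L/(8\Delta)$. Such $f$ exist by Theorem~\ref{thm:upper}, so the inner minimum is a well-defined integer, and it is at least $L/(8\Delta)$. Taking the supremum over $\Lip_L$ gives $\CC(L,\Delta)\ge L/(8\Delta)$.

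Finally, for language independence I observe that the only property of $\Dec$ used is Lemma~\ref{lem:count}, which holds for any partial map from bit strings to formulas. Lemma~\ref{lem:pigeon} uses nothing else about $\Dec$, and Lemma~\ref{lem:bump} does not mention $\Dec$ at all. So the argument goes through verbatim for any description language.

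There is no real obstacle, since the counting and packing are already done in the lemmas. The only point needing care is the integrality step that passes from the strict inequality $\dc(f)<L/(8\Delta)$ to $\dc(f)\le n-1$, using $n\ge L/(8\Delta)$.
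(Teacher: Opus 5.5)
Your proposal is correct and matches the paper's proof: you apply Lemma~\ref{lem:pigeon} to the bump family of Lemma~\ref{lem:bump} with $n\ge L/(8\Delta)$, and use integrality of $\dc$ to pass from $\dc(f)<L/(8\Delta)\le n$ to $\dc(f)\le n-1$. Your remarks on the bound for $\CC$ and on language independence (only Lemma~\ref{lem:count} involves $\Dec$) make explicit what the paper leaves implicit.
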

\begin{proof}
Apply Lemma~\ref{lem:pigeon} to the family of Lemma~\ref{lem:bump}, with $n\ge L/(8\Delta)$. Every $f$ with $\dc(f)<L/(8\Delta)\le n$ has $\dc(f)\le n-1$, since $\dc$ is an integer. The function $F$ is chosen once for all such $f$.
\end{proof}

\begin{proposition}[The additive term is necessary]\label{prop:lowerlog}
Let $0<\Delta\le1/36$ and $L>0$. Then $\CC(L,\Delta)\ge\frac12\lg(1/\Delta)$. This holds for every description language.
\end{proposition}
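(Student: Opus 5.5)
The plan is to use the constant functions, which lie in $\Lip_L$ for every $L>0$, so no lower bound on $L$ is needed. Put $m:=\lfloor 1/(2\Delta)\rfloor$ and let $A$ be the family of constants
\[
F_j(x):=\frac{j}{m}\ (0\le j\le m)\quad\text{or, more safely,}\quad F_j(x):=j\cdot 3\Delta\ \ \bigl(0\le j\le \lfloor 1/(3\Delta)\rfloor\bigr).
\]
All these take values in $[0,1]$ and are $0$-Lipschitz, hence $L$-Lipschitz. Two distinct members differ everywhere by at least $3\Delta>2\Delta$, so $A$ is $2\Delta$-separated in the sense of Lemma~\ref{lem:pigeon}. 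Its size is $|A|=\lfloor 1/(3\Delta)\rfloor+1>1/(3\Delta)$.

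Next I would take $n:=\lfloor\lg|A|\rfloor\ge1$, so that $|A|\ge2^n$. Lemma~\ref{lem:pigeon} then yields some $F\in A$ with $\norm{F-f}>\Delta$ for every $f$ with $\dc(f)\le n-1$. The argument uses only Lemma~\ref{lem:count}, so it applies to any decoder. Hence $\CC(L,\Delta)\ge n>\lg|A|-1>\lg(1/(3\Delta))-1=\lg(1/\Delta)-\lg 6$.

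It remains to check that $\lg(1/\Delta)-\lg6\ge\frac12\lg(1/\Delta)$. This is equivalent to $\frac12\lg(1/\Delta)\ge\lg 6$, that is, $1/\Delta\ge36$, which is exactly the hypothesis $\Delta\le1/36$. Because $\CC$ is integer-valued and the inequality $\CC\ge n$ is between integers, there is no further loss from rounding. I would double-check one boundary point: the strict inequality $n>\lg|A|-1$ combined with $|A|>1/(3\Delta)$ already gives the needed bound at $\Delta=1/36$.

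The main obstacle is the bookkeeping of constants, namely choosing the spacing so that separation is strict ($>2\Delta$) while the count still gives the threshold $36$. Spacing $3\Delta$ gives exactly $\lg 6$ of loss, which matches $36=6^2$. If spacing $2\Delta+\epsilon$ were used instead, the loss would shrink, but the given threshold suggests that the $3\Delta$ choice is the intended one.
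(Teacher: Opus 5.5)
Your proof is correct and is essentially the paper's argument: the same constants $j\cdot3\Delta$ with $K=\lfloor1/(3\Delta)\rfloor+1$ members, Lemma~\ref{lem:pigeon} applied with $n=\lfloor\lg K\rfloor$, and the same final estimate $\lg(1/\Delta)-\lg6\ge\frac12\lg(1/\Delta)$, which is exactly where $\Delta\le1/36$ is used. You were right to discard the first family you considered, the constants $j/m$ with $m=\lfloor1/(2\Delta)\rfloor$: their spacing is $1/m\ge2\Delta$, with equality whenever $1/(2\Delta)$ is an integer, so they are not strictly $2\Delta$-separated in general.
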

\begin{proof}
Let $K=\lfloor1/(3\Delta)\rfloor+1\ge 12$ and consider the constants $j\cdot3\Delta\in[0,1]$, $0\le j<K$. Each is $L$-Lipschitz, and distinct ones differ by at least $3\Delta>2\Delta$. Apply Lemma~\ref{lem:pigeon} with $n=\lfloor\lg K\rfloor\ge1$ (so $K\ge2^n$): some constant has all formulas of $\dc\le n-1$ at distance $>\Delta$, so $\CC\ge n$. Finally
\[
n\ge\lg K-1\ge\lg\frac1{3\Delta}-1=\lg\frac1\Delta-\lg6\ge\frac12\lg\frac1\Delta,
\]
the last step because $\Delta\le1/36$.
\end{proof}

\begin{corollary}[Two-sided bound]\label{cor:theta}
For $0<\Delta\le1/36$ and all $L>0$,
\[
\frac1{16}\Bigl(\frac L\Delta+\Lambda_\Delta\Bigr)\ \le\ \CC(L,\Delta)\ \le\ 435\,\frac L\Delta+12\,\Lambda_\Delta ,
\]
that is, $\CC(L,\Delta)=\Theta\bigl(L/\Delta+\lg(1/\Delta+2)\bigr)$.
\end{corollary}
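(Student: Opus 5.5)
The upper bound is exactly Theorem~\ref{thm:upper}, which holds for all $L,\Delta>0$. So only the lower bound $\CC(L,\Delta)\ge\frac1{16}(L/\Delta+\Lambda_\Delta)$ needs an argument for $\Delta\le1/36$. I will combine Theorem~\ref{thm:lower} and Proposition~\ref{prop:lowerlog} through the elementary inequality $\max\{a,b\}\ge\frac12(a+b)$. The plan is to show that $\CC\ge\frac18\,L/\Delta$ whenever that bound is relevant, that $\CC\ge\frac18\Lambda_\Delta$ always, and then to average.

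\emph{Logarithmic term.} Proposition~\ref{prop:lowerlog} gives $\CC\ge\frac12\lg(1/\Delta)$. To compare this with $\Lambda_\Delta=\lg(1/\Delta+2)$, note that $\Delta\le1/36$ gives $1/\Delta+2\le(1/\Delta)^{2}$, so $\Lambda_\Delta\le2\lg(1/\Delta)$. Hence $\CC\ge\frac14\Lambda_\Delta\ge\frac18\Lambda_\Delta$.

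\emph{Linear term, split by the size of $L$.}
\begin{itemize}
\item If $L\ge8\Delta$, Theorem~\ref{thm:lower} applies, since $\Delta<\frac12$, and gives $\CC\ge L/(8\Delta)$.
\item If $L<8\Delta$, then $L/\Delta<8$. In this case I will use the logarithmic bound to absorb the linear term. Since $\Delta\le1/36$, we have $\Lambda_\Delta\ge\lg 38>5$, so $L/\Delta<8<\frac85\Lambda_\Delta$. Therefore $\frac1{16}(L/\Delta+\Lambda_\Delta)<\frac1{16}\cdot\frac{13}5\Lambda_\Delta<\frac14\Lambda_\Delta\le\CC$.
\end{itemize}

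\emph{Combining.} In the case $L\ge8\Delta$ we have both $\CC\ge\frac18 L/\Delta$ and $\CC\ge\frac18\Lambda_\Delta$. Averaging these gives $\CC\ge\frac1{16}(L/\Delta+\Lambda_\Delta)$. The $\Theta$ statement then follows, because both sides are within constant factors of $L/\Delta+\Lambda_\Delta$.

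The argument has no real obstacle. The only delicate point is the small-$L$ regime, where Theorem~\ref{thm:lower} is unavailable and the constant bookkeeping must show that the logarithmic bound alone dominates $\frac1{16}(L/\Delta+\Lambda_\Delta)$. This works because $\Lambda_\Delta>5$ when $\Delta\le1/36$.
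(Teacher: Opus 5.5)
Your proof is correct and follows essentially the paper's argument. You use the same case split at $L=8\Delta$, the same bound $\Lambda_\Delta\le2\lg(1/\Delta)$ combined with Theorem~\ref{thm:lower} and Proposition~\ref{prop:lowerlog}, and the same averaging step. The only cosmetic difference is that in the small-$L$ case you absorb $L/\Delta<8$ using $\Lambda_\Delta>5$ instead of $\lg(1/\Delta)>5$.
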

\begin{proof}
The upper bound is Theorem~\ref{thm:upper}. For the lower bound let $A=L/\Delta$ and $B=\lg(1/\Delta)>5$. Since $1/\Delta+2\le1/\Delta^2$, we have $\Lambda_\Delta\le2B$. If $A\ge8$, Theorem~\ref{thm:lower} and Proposition~\ref{prop:lowerlog} give $\CC\ge\max\{A/8,B/2\}\ge A/16+B/4\ge(A+\Lambda_\Delta)/16$. If $A<8$, then $A+\Lambda_\Delta<8+2B\le4B$, and Proposition~\ref{prop:lowerlog} gives $\CC\ge B/2\ge(A+\Lambda_\Delta)/8$.
\end{proof}

\begin{remark}[Relation to metric entropy]
The lower bounds say that the class $\Lip_L$ has at least $2^{\Omega(L/\Delta)}$ elements at mutual distance $>2\Delta$, while $2^{M+1}$ strings can name at most $2^{M+1}$ formulas. This is the packing form of the Kolmogorov--Tikhomirov estimate $\lg N(\Lip_L,\Delta)\asymp L/\Delta$~\cite{KT1959}. Proving the packing directly shows where the hypotheses $\Delta<\frac12$ and $L\gtrsim\Delta$ are needed. For $\Delta\ge\frac12$ the single constant $\frac12$ approximates every $F:[0,1]\to[0,1]$, so no lower bound can hold there.
\end{remark}

\section{The Multidimensional Case}\label{sec:multid}

In this section $d\ge1$ is fixed. We extend the results of Sections~\ref{sec:upper} and~\ref{sec:lower} to Lipschitz functions of $d$ variables.

\paragraph{Class and complexity.} $\Lip_L^{(d)}$ is the class of $F:[0,1]^d\to[0,1]$ with $|F(x)-F(y)|\le L\lVert x-y\rVert_\infty$. Formulas are as in Definition~\ref{def:formula}, but with leaves $x_1,\dots,x_d$ or rational constants. A formula is admissible if no $\div$-node has a zero denominator on $[0,1]^d$, and $\norm{F-f}:=\sup_{[0,1]^d}|F-f|$ for admissible $f$ and $+\infty$ otherwise. The description complexity $\dc(f)$ is the length of the shortest string decoding to $f$ in the language below. We put
\[
\CC_d(L,\Delta):=\sup_{F\in\Lip_L^{(d)}}\ \min\{\dc(f):\ \norm{F-f}\le\Delta\},\qquad
\Lambda:=\lg\Bigl(\frac1\Delta+2\Bigr),\qquad A:=\Bigl(\frac L\Delta\Bigr)^d .
\]

\paragraph{The language $\mathcal L_d$.} This is Definition~\ref{def:language} with two changes.
\begin{itemize}[itemsep=2pt]
\item The leaves $x_k$ and $\iota_k$ are written as the symbol followed by $\gamma(k)$, where $\gamma$ is Elias' $\gamma$-code, so each costs at most $4+2\lg d+1$ bits. The symbols are $+,-,\times,\div,\exp,c,B,x,\iota,\kappa$, each with a $4$-bit code. A rational $p/q'$ is written as a sign bit, $\gamma(|p|+1)$ and $\gamma(q')$, and its length satisfies $\lambda(p/q')\le3+2\lg(|p|+1)+2\lg q'$.
\item The block (D5) has $r$ nested loops. It consists of the symbol $B$, then $\gamma(r)$ with $1\le r\le d$, then $\gamma(n)$, two rationals $t_0$ (offset) and $\tau$ (step), then $\gamma(b)$, then $n^r$ words $w_j\in\{0,1\}^b$ indexed by $j\in[n]^r$ in lexicographic order, then a template $T$. The template is built from the productions (D1)--(D4) and may also use the leaves $\iota_1,\dots,\iota_r$ and $\kappa$. The block denotes the right-nested sum of the formulas $T_j$ over $j\in[n]^r$ in lexicographic order. In $T_j$, each $\iota_m$ is replaced by the constant $t_0+(j_m-1)\tau$ and $\kappa$ by the constant $(\text{integer value of }w_j)-2^{b-1}$.
\end{itemize}
The lower bounds below hold for any decoder, and only the upper bound uses $\mathcal L_d$.

\begin{theorem}\label{thm:mainD}
There are constants $\alpha_d,\beta_d$ depending only on $d$ such that for all $L>0$ and all $0<\Delta\le1/36$,
\[
\frac{A+\Lambda}{2\cdot8^d}\ \le\ \CC_d(L,\Delta)\ \le\ \alpha_d\,A+\beta_d\,\Lambda .
\]
The upper bound holds for all $\Delta>0$. One may take $\alpha_d=20^d(4d^2+4d+7)$ and $\beta_d=\max\{11,\,B'_d+A'_d/2\}$ with $A'_d,B'_d$ as in Section~\ref{sec:bitsD}.
\end{theorem}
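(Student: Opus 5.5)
The proof has two halves, mirroring Sections~\ref{sec:upper} and~\ref{sec:lower}.

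\emph{Lower bound.} We reuse Lemmas~\ref{lem:count} and~\ref{lem:pigeon}, which hold verbatim for functions on $[0,1]^d$ and any decoder.
\begin{itemize}
\item \emph{Bump family.} For $L\ge8\Delta$ put $m:=\lceil L/(4\Delta)\rceil-1\ge L/(8\Delta)$ and partition $[0,1]^d$ into $m^d$ cubes of side $1/m$. In each cube place the pyramid $\phi_j(x)=\max\{0,h-L\lVert x-c_j\rVert_\infty\}$ with $h=\min\{1,L/(2m)\}>2\Delta$. Its support lies in the cube, it is $L$-Lipschitz in the sup norm, and sums $G_\omega=\sum_j\omega_j\phi_j$ over $\omega\in\{0,1\}^{m^d}$ are pairwise $h$-separated. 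Lemma~\ref{lem:pigeon} with $n=m^d\ge A/8^d$ gives $\CC_d\ge A/8^d$.
\item \emph{Constants.} The constants argument of Proposition~\ref{prop:lowerlog} is dimension-free and gives $\CC_d\ge\frac12\lg(1/\Delta)\ge\Lambda/4$.
\item \emph{Combination.} Split into $L\ge8\Delta$ and $L<8\Delta$ as in Corollary~\ref{cor:theta}. If $L\ge8\Delta$, then $\max\{A/8^d,\Lambda/4\}\ge(A+\Lambda)/(2\cdot 8^d)$. If $L<8\Delta$, then $A<8^d$, and $\Lambda>5$ together with $\Lambda/4\ge(A+\Lambda)/(2\cdot8^d)$ closes the case; the check is elementary since $8^d\ge 8$.
\end{itemize}

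\emph{Upper bound.} Cases 1 and 2 are treated exactly as in Theorem~\ref{thm:upper}: a constant suffices, costing $O(\Lambda)$, which accounts for the $11$ inside $\beta_d$. In Case 3 take $q=\lceil c_d/\Delta\rceil$ and $N=\lceil qL\rceil$ with suitable $c_d$. Let $p_j=\lfloor qF(j/N)\rceil$ on the grid $\{0,\dots,N\}^d$, and let $S$ be the multilinear (tensor-product hat) interpolant.
\begin{itemize}
\item \emph{Interpolation error.} On each cell $S_F$ is a convex combination of vertex values, each within $L/N$ in sup-distance. This gives $\norm{F-S_F}\le L/N$, and the rounding error adds $1/(2q)$.
\item \emph{ReLU-product expansion.} Write $S$ in the basis $\prod_k\max\{0,x_k-i_k/N\}$ (together with the factors $1$ and $x_k$ for lower-dimensional faces). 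The coefficients are $(N/q)^{|I|}$ times mixed second differences of $p$ in the active coordinates. Since $|p_{j+e_k}-p_j|\le qL/N+1\le2$, iterating differences gives integers bounded by $2\cdot4^{d-1}$ or so, i.e.\ $O(d)$ bits each.
\item \emph{Blocks.} Group the terms by the set $I\subseteq[d]$ of active coordinates. Each group is one block with $r=|I|$ nested loops, template $\kappa\cdot\prod_{k\in I}R_\gamma(x_k-\iota_m)$, and the common factor $(N/q)^{|I|}$ written once outside. Coordinate permutations are handled by choosing which $x_k$ appears in the template.
\item \emph{Smoothing.} Replace each $\max$ by $R_\gamma$ with $\gamma=2^{-k}$, $k=O(d\lg N)$.
\item \emph{Bit count.} The coefficients contribute $\sum_r\binom dr N^r\cdot O(d)\le 2^d(N+1)^d O(d)$ bits. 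Headers and templates contribute $O(2^d d\lg N+\lg q)$. With $N<c'_dL/\Delta$, this yields $\alpha_dA+\beta_d\Lambda$, absorbing $\lg N$ terms into $N^d$ as in Step 7.
\end{itemize}

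\emph{Main obstacle.} The hard part is the smoothing error in $d$ variables. Products of smooth ramps do not differ from products of ReLUs by merely $\gamma/3$: we have
\[
|ab-a'b'|\le|a-a'||b|+|a'||b-b'|,
\]
and the factors $|x_k-i/N|\le1$ are bounded, so each product incurs error $O(d\gamma)$. This must then be multiplied by the sum of $|\text{coef}|\cdot(N/q)^{|I|}$ over all $N^d$ terms, which is up to $N^{2d}/q^d$ times constants.

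So I would pick $\gamma\le(4N)^{-2d}\cdot(\text{const})^{-d}$. Then $k=O(d\lg N)$, the constant $2^k$ still costs only $O(d\lg N)$ bits, and the sum of these errors stays below $\Delta/6$.

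Verifying that the mixed second-difference coefficients really are bounded integers (requiring $q$ proportional to $1/\Delta$ with a $d$-dependent constant, so that $L/N\le\Delta/4$) is the other delicate point. I would establish it by induction on $|I|$, starting from the one-dimensional identity~\eqref{eq:S}.
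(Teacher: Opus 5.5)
Your proposal follows the paper's proof essentially step for step. For the lower bound you use the same pyramid family, the same constant family and the same two-case combination. For $G_\omega\in\Lip_L^{(d)}$, note as the paper does that $G_\omega=\max_j\omega_j\phi_j$. For the upper bound you use the multilinear interpolant expanded in products of ramps with integer mixed-difference coefficients, grouped by active coordinate set into $2^d-1$ nested-loop blocks and smoothed with $\gamma=2^{-O(d\lg N)}$. Your coefficient bound pulls out one first difference (bounded by $2$) and applies the remaining stencil of $\ell^1$-norm at most $2\cdot4^{d-1}$, giving $|e_a|\le4^d$. The paper instead uses zero row sums and $|p_v-p_w|\le3$ on the stencil support to get $3\cdot4^d$; either works.

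One formula needs fixing. The coefficient of $\prod_{k\in I}\max\{0,x_k-\cdot\}$ is $N^{|I|}e_a/q$, not $(N/q)^{|I|}e_a$. The factor $1/q$ enters once, through $y=p/q$, while a factor $N$ enters once per active coordinate; tensorize $Pg=\sum_a(Dg)_a\varphi^{(a)}$ with $\varphi^{(a)}(t)=N\max\{0,t-\frac{a-1}N\}$ to see this. With your factor the blocks do not reproduce $S$ once $|I|\ge2$. Your smoothing estimate $N^{2d}/q^d$ is also too optimistic by $q^{d-1}$; the correct total is at most a constant times $4^d\,d\,(2N)^{2d}\gamma/q$. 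Your choice $\gamma\le(4N)^{-2d}C^{-d}$ does not use $q$, so it still makes this at most $\Delta/6$ for a suitable constant $C$, and nothing downstream breaks.

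Two smaller remarks. First, no $d$-dependent constant in $q$ is needed: $L/N+1/(2q)\le3/(2q)$, and $q=\lceil4/\Delta\rceil$ already gives $3\Delta/8$. Second, the explicit values $\alpha_d=20^d(4d^2+4d+7)$ and $\beta_d$ in the statement come from the paper's exact bit count, which your $O(\cdot)$ accounting does not reproduce. Your count also undercounts the templates: each of their $r$ factors carries the constant $2^k$, costing $O(d\lg N)$ bits. This is harmless for the existence of $\alpha_d,\beta_d$.
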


\subsection{Lower bound for $d\ge1$}\label{sec:lowerD}

\begin{lemma}[Separated families]\label{lem:sepD}
Let $\mathcal A\subseteq\Lip_L^{(d)}$ satisfy $\norm{F-F'}>2\Delta$ for all distinct $F,F'\in\mathcal A$, and let $|\mathcal A|\ge2^n$ for an integer $n\ge1$. Then some $F\in\mathcal A$ has $\norm{F-f}>\Delta$ for every formula $f$ with $\dc(f)\le n-1$. In particular $\CC_d(L,\Delta)\ge n$. This holds for every decoder.
\end{lemma}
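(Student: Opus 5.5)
The plan is to repeat the argument of Lemma~\ref{lem:pigeon} verbatim, replacing $[0,1]$ by $[0,1]^d$. The only ingredients are the counting bound of Lemma~\ref{lem:count} and the triangle inequality for $\norm{\cdot}$ on $[0,1]^d$. Lemma~\ref{lem:count} used nothing about the decoder beyond its being a partial map from bit strings to formulas, so it applies unchanged to formulas in $x_1,\dots,x_d$ under $\mathcal L_d$ or any other decoder. It gives at most $2^{n}-1$ formulas with $\dc(f)\le n-1$.

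The argument is by contradiction. Suppose every $F\in\mathcal A$ admits a formula $f_F$ with $\dc(f_F)\le n-1$ and $\norm{F-f_F}\le\Delta$. Such an $f_F$ is necessarily admissible, since otherwise the distance is $+\infty$, so the sup-norm triangle inequality applies. If $f_F=f_{F'}$, then $\norm{F-F'}\le\norm{F-f_F}+\norm{f_{F'}-F'}\le2\Delta$, which forces $F=F'$ by the separation hypothesis. Hence $F\mapsto f_F$ is an injection from $\mathcal A$ into a set of size at most $2^n-1$. This contradicts $|\mathcal A|\ge2^n$. Therefore some $F\in\mathcal A$ has $\norm{F-f}>\Delta$ for all $f$ with $\dc(f)\le n-1$.

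For the ``in particular'' clause, fix this $F$. Any $f$ with $\norm{F-f}\le\Delta$ must have $\dc(f)\ge n$, so the inner minimum in the definition of $\CC_d(L,\Delta)$ is at least $n$ for this $F$. The supremum over $\Lip_L^{(d)}\supseteq\mathcal A$ is then at least $n$. The set over which the minimum is taken is nonempty, as guaranteed by the upper bound part of Theorem~\ref{thm:mainD}. Even without that, the minimum over an empty set can be read as $+\infty\ge n$, so the bound holds either way.

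No step is a real obstacle. The only point needing care is that neither the counting nor the triangle inequality depends on the dimension or on the specific language, which is exactly why the statement holds for every decoder.
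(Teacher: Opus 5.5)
Your proposal is correct and is essentially the paper's proof: there are at most $2^n-1$ formulas with $\dc\le n-1$, and the separation hypothesis together with the triangle inequality makes $F\mapsto f_F$ injective, contradicting $|\mathcal A|\ge2^n$. You also spell out the admissibility point and the deduction $\CC_d(L,\Delta)\ge n$, which the paper leaves implicit.
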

\begin{proof}
At most $2^{n}-1$ bit strings have length $\le n-1$, so at most $2^n-1$ formulas have $\dc\le n-1$. If every $F\in\mathcal A$ had a formula $f_F$ with $\dc(f_F)\le n-1$ and $\norm{F-f_F}\le\Delta$, then $f_F=f_{F'}$ would give $\norm{F-F'}\le2\Delta$, hence $F=F'$. So $F\mapsto f_F$ would be injective and $|\mathcal A|\le2^n-1$, a contradiction.
\end{proof}

\begin{lemma}[Bump family]\label{lem:bumpD}
Let $0<\Delta<\frac12$ and $L\ge8\Delta$, and put $m:=\lceil L/(4\Delta)\rceil-1$. Then $m\ge L/(8\Delta)\ge1$, and there is a family $\mathcal G\subseteq\Lip_L^{(d)}$ with $|\mathcal G|=2^{m^d}$ and $\norm{G-G'}>2\Delta$ for distinct members.
\end{lemma}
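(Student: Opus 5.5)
The plan is to copy the one-dimensional construction of Lemma~\ref{lem:bump} onto a product grid of cubes. First, the numerical claims about $m$ come out exactly as before. From $L\ge8\Delta$ we get $L/(4\Delta)\ge2$, hence $m\ge L/(4\Delta)-1\ge L/(8\Delta)\ge1$. By definition of the ceiling we also have $m<L/(4\Delta)$, so $L/(2m)>2\Delta$. Put $h:=\min\{1,L/(2m)\}$. Then $h>2\Delta$, using $2\Delta<1$ for the first entry of the minimum.

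For the family, index the cells by multi-indices $j\in[m]^d$. The cell $Q_j$ is $\prod_k[\frac{j_k-1}m,\frac{j_k}m]$, with centre $c_j$ whose coordinates are $(2j_k-1)/(2m)$. Define
\[
\phi_j(x):=\max\bigl\{0,\ h-L\lVert x-c_j\rVert_\infty\bigr\}.
\]
This is a sup-norm pyramid. Its support is the cube of sup-radius $h/L\le1/(2m)$ about $c_j$, so it lies in $Q_j$. Distinct cells meet only on boundary faces, and $\phi_j$ vanishes there.

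For each $\omega\in\{0,1\}^{[m]^d}$ set $G_\omega:=\sum_j\omega_j\phi_j$. This gives $2^{m^d}$ functions, and they take values in $[0,h]\subseteq[0,1]$. For two distinct $\omega,\omega'$, choose a $j$ where they differ. At the centre $c_j$ the two functions differ by exactly $h$, and they never differ by more than $h$. So $\norm{G_\omega-G_{\omega'}}=h>2\Delta$. In particular the map $\omega\mapsto G_\omega$ is injective, so $|\mathcal G|=2^{m^d}$.

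The only step needing care is showing $G_\omega\in\Lip_L^{(d)}$ with respect to the sup norm. Each $\phi_j$ is $L$-Lipschitz by itself. It is the maximum of $0$ and $h-L\lVert x-c_j\rVert_\infty$, and the latter is $L$-Lipschitz in $\lVert\cdot\rVert_\infty$ by the triangle inequality. A maximum of Lipschitz functions with the same constant keeps that constant.

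The sum of disjointly supported pieces needs one more argument. Take $x,y\in[0,1]^d$ and the segment joining them. Since $G_\omega$ is continuous, split the segment at its finitely many crossings of cell boundaries, where all $\phi_j$ vanish. On each sub-segment inside a single cell, $G_\omega$ equals one $\omega_j\phi_j$, which is $L$-Lipschitz. Lengths add along the segment in any norm, so $|G_\omega(x)-G_\omega(y)|\le L\lVert x-y\rVert_\infty$ follows by the triangle inequality.

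Alternatively, and more simply, one can argue case by case. If $x$ and $y$ lie in supports of different bumps, pick a point $z$ on the segment where the segment leaves the support of the first bump. At $z$ both functions vanish, so
\[
|G(x)-G(y)|\le|G(x)|+|G(y)|\le L\lVert x-z\rVert_\infty+L\lVert z-y\rVert_\infty=L\lVert x-y\rVert_\infty .
\]
The equality holds because $z$ lies on the segment. I would present this second argument, since it avoids any regularity bookkeeping.
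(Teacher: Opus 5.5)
Your proof is correct and follows the paper's construction essentially verbatim: the same $m$, the same height $\min\{1,L/(2m)\}>2\Delta$, the same sup-norm pyramids centred at the $c_j$, and the same separation by $h$ at $c_j$. The only difference is the Lipschitz step. The paper notes that, because the supports are disjoint and all terms are nonnegative, $G_\omega=\sum_j\omega_j\phi_j=\max_j\omega_j\phi_j$, so the max-of-$L$-Lipschitz fact you already used for each $\phi_j$ finishes it at once; your segment argument is valid but not needed.
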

\begin{proof}
By definition $m<L/(4\Delta)$. Since $L\ge8\Delta$ we have $L/(4\Delta)\ge2$ and $1\le L/(8\Delta)$, so $m\ge L/(4\Delta)-1\ge L/(8\Delta)\ge1$.
Let $\eta:=\min\{1,L/(2m)\}$. Then $\eta>2\Delta$, since $2\Delta<1$ and $L/(2m)>2\Delta$ (because $m<L/(4\Delta)$).
For $j\in[m]^d$ let $c_j=\bigl(\frac{2j_1-1}{2m},\dots,\frac{2j_d-1}{2m}\bigr)$ and $\psi_j(x)=\max\{0,\eta-L\lVert x-c_j\rVert_\infty\}$.
Each $\psi_j$ is $L$-Lipschitz for $\lVert\cdot\rVert_\infty$, takes values in $[0,\eta]\subseteq[0,1]$, has $\psi_j(c_j)=\eta$, and is supported in the open $\ell_\infty$-ball of radius $\eta/L\le1/(2m)$ about $c_j$. Distinct centres are at $\ell_\infty$-distance $\ge1/m$, so these balls are pairwise disjoint.
For $\omega\in\{0,1\}^{[m]^d}$ put $G_\omega:=\sum_j\omega_j\psi_j=\max_j\omega_j\psi_j$. A maximum of $L$-Lipschitz functions is $L$-Lipschitz, so $G_\omega\in\Lip_L^{(d)}$. If $\omega\ne\omega'$ differ at $j$, then $|G_\omega(c_j)-G_{\omega'}(c_j)|=\eta$, because the other bumps vanish at $c_j$. Everywhere the difference is at most $\eta$. Hence $\norm{G_\omega-G_{\omega'}}=\eta>2\Delta$.
\end{proof}

\begin{proposition}[Main term]\label{prop:lowerAD}
If $0<\Delta<\frac12$ and $L\ge8\Delta$, then $\CC_d(L,\Delta)\ge m^d\ge\bigl(L/(8\Delta)\bigr)^d$.
\end{proposition}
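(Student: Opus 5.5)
The plan is to combine Lemma~\ref{lem:bumpD} with Lemma~\ref{lem:sepD}, in the same way Theorem~\ref{thm:lower} combined Lemmas~\ref{lem:bump} and~\ref{lem:pigeon}. The hypotheses $0<\Delta<\frac12$ and $L\ge8\Delta$ are exactly those of Lemma~\ref{lem:bumpD}, so that lemma applies directly.

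First, take the family $\mathcal G\subseteq\Lip_L^{(d)}$ from Lemma~\ref{lem:bumpD}, with $m=\lceil L/(4\Delta)\rceil-1$. It has $|\mathcal G|=2^{m^d}$, and its members are pairwise at sup-distance $>2\Delta$. Second, apply Lemma~\ref{lem:sepD} with $\mathcal A=\mathcal G$ and $n:=m^d$. This $n$ is an integer, and $n\ge1$ because $m\ge1$; also $|\mathcal A|=2^n$. The lemma produces a single $F\in\mathcal G$ with $\norm{F-f}>\Delta$ for every formula $f$ with $\dc(f)\le m^d-1$. Since $\dc$ is integer valued, every $f$ with $\norm{F-f}\le\Delta$ must then have $\dc(f)\ge m^d$. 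Hence the inner minimum in the definition of $\CC_d$ at this $F$ is at least $m^d$, and so $\CC_d(L,\Delta)\ge m^d$.

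Finally, Lemma~\ref{lem:bumpD} gives $m\ge L/(8\Delta)\ge 1$. Raising to the $d$-th power preserves the inequality because both sides are positive, so $m^d\ge(L/(8\Delta))^d$. As with Lemma~\ref{lem:sepD}, the argument uses only the counting of bit strings and therefore holds for every decoder.

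No step is a genuine obstacle, since all the content sits in Lemmas~\ref{lem:sepD} and~\ref{lem:bumpD}. The one point needing care is the bookkeeping that $n=m^d$ satisfies the integrality and $n\ge1$ requirements of Lemma~\ref{lem:sepD}, both of which follow from $m\ge1$.
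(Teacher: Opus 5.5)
Your proposal is correct and follows the paper's proof: the paper simply applies Lemma~\ref{lem:sepD} to the family of Lemma~\ref{lem:bumpD} with $n=m^d$. Your extra checks, that $n=m^d$ is an integer with $n\ge1$ and that $m\ge L/(8\Delta)\ge1$ gives $m^d\ge(L/(8\Delta))^d$, are exactly the details that one-line proof leaves implicit.
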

\begin{proof}
Apply Lemma~\ref{lem:sepD} to the family of Lemma~\ref{lem:bumpD} with $n=m^d$.
\end{proof}

\begin{proposition}[Additive term]\label{prop:lowerBD}
If $0<\Delta\le1/36$ and $L>0$, then $\CC_d(L,\Delta)\ge\frac12\lg(1/\Delta)$.
\end{proposition}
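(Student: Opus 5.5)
The plan is to copy the proof of Proposition~\ref{prop:lowerlog}, replacing Lemma~\ref{lem:pigeon} by its $d$-dimensional form, Lemma~\ref{lem:sepD}. Constant functions carry over with no change: a constant map $[0,1]^d\to[0,1]$ lies in $\Lip_L^{(d)}$ for every $L>0$. Two constants $a\ne a'$ satisfy $\norm{a-a'}=|a-a'|$, because the sup over $[0,1]^d$ of a constant is that constant. So only the one-dimensional values matter, and the dimension $d$ never enters.

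The steps are as follows.
\begin{enumerate}
\item Set $K:=\lfloor 1/(3\Delta)\rfloor+1$. Since $\Delta\le 1/36$, we have $1/(3\Delta)\ge 12$, so $K\ge 13$.
\item Take the family $\mathcal A=\{\,j\cdot 3\Delta:\ 0\le j<K\,\}$ of constant functions on $[0,1]^d$. The largest value is $(K-1)3\Delta\le 1$, so every member lies in $\Lip_L^{(d)}$.
\item Distinct members differ by at least $3\Delta>2\Delta$ in sup norm, so $\mathcal A$ is $2\Delta$-separated.
\item Put $n:=\lfloor \lg K\rfloor\ge 3$, so that $|\mathcal A|=K\ge 2^n$. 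Lemma~\ref{lem:sepD} then gives $\CC_d(L,\Delta)\ge n$, and this holds for every decoder.
\end{enumerate}

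The remaining step is the arithmetic, exactly as in Proposition~\ref{prop:lowerlog}:
\[
n\ge \lg K-1\ge \lg\frac1{3\Delta}-1=\lg\frac1\Delta-\lg 6\ge\frac12\lg\frac1\Delta .
\]
The last inequality is equivalent to $\tfrac12\lg(1/\Delta)\ge\lg 6$, that is, $1/\Delta\ge 36$, which is the hypothesis.

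There is no real obstacle here. The only points to check are the two facts used above: constants belong to $\Lip_L^{(d)}$ for every $L>0$, including small $L$, and the sup norm on $[0,1]^d$ of a constant difference equals its absolute value. Both are immediate.
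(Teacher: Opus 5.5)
Your proof is correct and follows the paper's argument: the same family of $K=\lfloor1/(3\Delta)\rfloor+1$ constant functions $j\cdot3\Delta$, Lemma~\ref{lem:sepD} with $n=\lfloor\lg K\rfloor$, and the same chain $n\ge\lg K-1\ge\lg(1/\Delta)-\lg6\ge\frac12\lg(1/\Delta)$. Your sharper counts $K\ge13$ and $n\ge3$ are valid but not needed, since the paper records only $K\ge12$ and $n\ge1$.
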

\begin{proof}
Let $K=\lfloor1/(3\Delta)\rfloor+1\ge12$. The constant functions $j\cdot3\Delta\in[0,1]$, $0\le j<K$, lie in $\Lip_L^{(d)}$ and differ pairwise by $\ge3\Delta>2\Delta$. Lemma~\ref{lem:sepD} with $n=\lfloor\lg K\rfloor\ge1$ gives
\[
\CC_d\ge n\ge\lg K-1\ge\lg\frac1{3\Delta}-1=\lg\frac1\Delta-\lg6\ge\frac12\lg\frac1\Delta,
\]
the last step because $\Delta\le1/36$.
\end{proof}

\begin{proof}[Proof of the lower bound in Theorem~\ref{thm:mainD}]
Let $B=\lg(1/\Delta)\ge\lg36>5$. Since $\Delta\le\frac12$ we have $\frac1\Delta+2\le\frac1{\Delta^2}$, hence $\Lambda\le2B$.

\emph{Case $L\ge8\Delta$.} Then $A\ge8^d$. By Propositions~\ref{prop:lowerAD} and~\ref{prop:lowerBD},
\[
\CC_d\ge\max\Bigl\{\frac A{8^d},\frac B2\Bigr\}\ge\frac12\Bigl(\frac A{8^d}+\frac\Lambda4\Bigr)\ge\frac{A+\Lambda}{2\cdot8^d},
\]
using $B/2\ge\Lambda/4$ and $\frac14\ge8^{-d}$.

\emph{Case $L<8\Delta$.} Then $A<8^d$, and $A+\Lambda<8^d+2B\le8^dB$, because $B>5$ gives $8^d/B+2\le8^d/5+2\le8^d$. By Proposition~\ref{prop:lowerBD}, $\CC_d\ge B/2>(A+\Lambda)/(2\cdot8^d)$.
\end{proof}

\subsection{Upper bound for $d\ge1$}\label{sec:upperD}

\subsubsection{The obstacle for $d\ge2$}

The one-dimensional proof of Theorem~\ref{thm:upper} stores integer slope changes in $\{-4,\dots,4\}$, which cost $O(1)$ bits each. A naive $d$-dimensional version (multilinear interpolation with coefficients $y_i=p_i/q$) would store every grid value $p_i\in\{0,\dots,q\}$. That costs $\lg q\approx\lg(1/\Delta)$ bits per node, giving $N^d\lg(1/\Delta)$ in total, which is too much. The remedy is to use \emph{mixed differences} in every coordinate. Their weights sum to zero, so the Lipschitz condition makes them bounded integers.

\subsubsection{Construction}

If $\Delta\ge\frac12$, the constant $\frac12$ has $\norm{F-\frac12}\le\frac12\le\Delta$ and $\dc=4+\lambda(1/2)=11\le11\Lambda$. Assume $\Delta<\frac12$ from now on. Put
\[
q:=\lceil4/\Delta\rceil,\qquad N:=\max\{1,\lceil qL\rceil\}\ (\ge qL),\qquad z_i:=i/N\ \ (i\in\{0,\dots,N\}^d),
\]
and $p_i:=\lfloor qF(z_i)\rceil\in\{0,\dots,q\}$, $y_i:=p_i/q$, so $|y_i-F(z_i)|\le\frac1{2q}$.

\paragraph{Step 1 (one-dimensional change of basis).} Let $\varphi^{(0)}(t)=1$ and $\varphi^{(a)}(t)=N\max\{0,t-\tfrac{a-1}N\}$ for $a=1,\dots,N$. For $g:\{0,\dots,N\}\to\R$ define
\[
(Dg)_0=g(0),\qquad (Dg)_1=g(1)-g(0),\qquad (Dg)_a=g(a)-2g(a-1)+g(a-2)\ \ (a\ge2).
\]
Then the piecewise linear interpolant of $g$ on the grid $\{i/N\}$ equals $\sum_{a=0}^N(Dg)_a\,\varphi^{(a)}(t)$ on $[0,1]$.

\emph{Proof.} The right side is continuous, equals $g(0)$ at $0$, and on $[\frac iN,\frac{i+1}N]$ has slope $N\bigl[(g(1)-g(0))+\sum_{a=2}^{i+1}(g(a)-2g(a-1)+g(a-2))\bigr]$, which telescopes to $N(g(i+1)-g(i))$. \qed

\paragraph{Step 2 (tensorization).} Let $\chi_i$ be the one-dimensional hat functions on the grid, and let
$M(x)=\sum_vy_v\prod_k\chi_{v_k}(x_k)$ be the multilinear interpolant of $(y_v)$. Then
\[
M(x)=\frac1q\sum_{a\in\{0,\dots,N\}^d}e_a\prod_{k=1}^d\varphi^{(a_k)}(x_k),\qquad e_a:=\sum_v\rho_{a,v}\,p_v,\quad \rho_{a,v}:=\prod_{k=1}^dD_{a_k,v_k},
\]
where $D_{a,v}$ is the matrix of the operator $D$.

\emph{Proof.} For product-form data $g=g_1\otimes\cdots\otimes g_d$, both sides equal $\prod_k(Pg_k)(x_k)$, where $P$ is the one-dimensional interpolation operator of Step~1. Product-form data span all $g$, and both sides are linear in $g$. \qed

\paragraph{Step 3 (bounded integer coefficients).} For $a\ne0$ we have $|e_a|\le E:=3\cdot4^d$, and $e_0=p_0$.

\emph{Proof.}
(i) The row sums of $D$ are $1$ for $a=0$ and $0$ for $a\ge1$. Hence $\sum_v\rho_{a,v}=\prod_k(\text{row sum at }a_k)=0$ whenever some $a_k\ge1$, i.e.\ whenever $a\ne0$.
(ii) The $\ell^1$-norms of the rows of $D$ are $1,2,4$ for $a=0$, $a=1$, $a\ge2$. Hence $\sum_v|\rho_{a,v}|\le4^d$.
(iii) The support of $\rho_{a,\cdot}$ lies in a box of index-width at most $2$ in each coordinate, so any two points $v,w$ in it satisfy $\lVert z_v-z_w\rVert_\infty\le2/N$.
(iv) For such $v,w$: $|p_v-p_w|=q|y_v-y_w|\le q\bigl(2L/N+1/q\bigr)\le2qL/N+1\le3$, using $N\ge qL$; the $p$'s are integers.
(v) By~(i), $e_a=\sum_v\rho_{a,v}(p_v-p_w)$ for any fixed $w$ in the support, so $|e_a|\le3\cdot4^d$ by~(ii) and~(iv). Finally $\rho_{0,v}=\delta_{v,0}$, so $e_0=p_0$. \qed

Hence for $a\ne0$ each $e_a$ fits in $b:=2d+3$ bits in offset-binary form, since $2^{b-1}=4^{d+1}\ge E+1$. Only $e_0=p_0$ is large, and it is a single number.

\paragraph{Step 4 (interpolation error).} The weights $\theta_v(x)=\prod_k\chi_{v_k}(x_k)$ are nonnegative, sum to $1$, and are supported on vertices of the cell containing $x$, which lie within $\ell_\infty$-distance $1/N$ of $x$. Hence
\[
|F(x)-M(x)|\le\sum_v\theta_v(x)\bigl(|F(x)-F(z_v)|+|F(z_v)-y_v|\bigr)\le\frac LN+\frac1{2q}\le\frac3{2q}\le\frac{3\Delta}8 ,
\]
using $L/N\le1/q$ and $1/q\le\Delta/4$.

\paragraph{Step 5 (smoothing).} Let $s:=\lceil\lg d\rceil+4d+2d\lceil\lg N\rceil$ and $\gamma:=2^{-s}$. Let $R_\gamma(z)=z/(1+e^{-z/\gamma})$, for which $|\max\{0,z\}-R_\gamma(z)|\le\gamma/3$ by Lemma~\ref{lem:ramp}. Put $\varphi_\gamma^{(0)}=1$, $\varphi_\gamma^{(a)}(t)=N\,R_\gamma\bigl(t-\tfrac{a-1}N\bigr)$ for $a=1,\dots,N$, and
\[
f(x):=\frac1q\sum_{a}e_a\prod_{k=1}^d\varphi_\gamma^{(a_k)}(x_k).
\]
On $[0,1]$ the exact factors satisfy $0\le\varphi^{(a)}\le N$, $|\varphi_\gamma^{(a)}-\varphi^{(a)}|\le N\gamma/3$, and the smoothed factors are at most $N(1+\gamma/3)\le2N$. Telescoping the product gives $\bigl|\prod\varphi^{(a_k)}-\prod\varphi_\gamma^{(a_k)}\bigr|\le d\,\frac{N\gamma}3(2N)^{d-1}$. There are at most $(N+1)^d\le(2N)^d$ terms and the term $a=0$ is exact. Therefore
\[
\norm{M-f}\le\frac Eq(2N)^d\cdot d\,2^{d-1}N^d\frac\gamma3=\frac{d\,2^{4d-1}N^{2d}\gamma}q\le\frac1{2q}\le\frac1q\le\frac\Delta4 ,
\]
because $\gamma\le1/(d\,2^{4d}N^{2d})$ by the choice of $s$. Together with Step~4, $\norm{F-f}\le\frac{3\Delta}8+\frac\Delta4<\Delta$.

\emph{Admissibility.} $R_\gamma(z)=z\div\bigl(1+\exp(0-z\times2^s)\bigr)$ has denominator $\ge1$, so $f$ is admissible and defined on all of $\R^d$.

\paragraph{Step 6 (formula and block structure).} The constant function $\varphi^{(0)}$ is not a ramp, so we split the sum over $a$ by $J:=\{k:a_k\ge1\}$. Then
\[
f=\frac1q\Bigl(p_0+\sum_{\emptyset\ne J\subseteq[d]}S_J\Bigr),\qquad S_J=\sum_{a\in\{1..N\}^J}e_{J,a}\prod_{k\in J}N\,R_\gamma\Bigl(x_k-\frac{a_k-1}N\Bigr),
\]
where $e_{J,a}$ denotes $e_a$ with $a_k=0$ for $k\notin J$. Each $S_J$ with $J=\{k_1<\dots<k_r\}$ is one $r$-fold block with $n=N$, $t_0=0$, $\tau=1/N$, $b=2d+3$, words $w_j=e_{J,j}+2^{b-1}$, and template
\[
T_J=\kappa\times\bigl(P_1\times(P_2\times\cdots\times P_r)\bigr),\qquad
P_m=c(N)\times\Bigl((x_{k_m}-\iota_m)\div\bigl(1+\exp\bigl(0-((x_{k_m}-\iota_m)\times c(2^s))\bigr)\bigr)\Bigr).
\]
Indeed $\iota_m$ becomes $(j_m-1)/N$, and $P_m=N\,R_\gamma(x_{k_m}-\iota_m)$ since $1/\gamma=2^s$. The whole formula is $\times\ c(1/q)\ \bigl(+\ c(p_0)\ (+\ S_{J_1}\ (\cdots))\bigr)$ with one $+$ per block.

\subsubsection{Bit count}\label{sec:bitsD}

Write $\ell:=\lg(N+1)$. We bound each part.
\begin{itemize}[itemsep=2pt]
\item \emph{Each $P_m$.} It has $16$ nodes ($64$ bits). Its payloads are: $c(N)$ costs $3+2\ell$; $c(1)$ costs $5$; $c(0)$ costs $3$; $c(2^s)$ costs $2s+3$; and the four leaves $x_{k_m},\iota_m,x_{k_m},\iota_m$ cost $4(2\lg d+1)$. Hence $|P_m|\le82+8\lg d+2\ell+2s$.
\item \emph{Each template.} The symbol $\kappa$ and $r$ symbols $\times$ add $4+4r$ bits, so $|T_J|\le r(86+8\lg d+2\ell+2s)+4$. Since $s\le\lg d+6d+1+2d\lg N$ and $\lg N\le\ell$, we have $2s\le2\lg d+12d+2+4d\ell$, and with $r\le d$,
\[
|T_J|\le 4+d(88+10\lg d+12d)+(4d^2+2d)\ell .
\]
\item \emph{Each block header.} It costs $4+\bigl(2\lg d+1\bigr)+\bigl(2\lg N+1\bigr)+3+\bigl(5+2\lg N\bigr)+\bigl(2\lg(2d+3)+1\bigr)=15+2\lg d+4\lg N+2\lg(2d+3)$ bits.
\item \emph{Block overhead (header plus template).} Adding the two bounds gives at most $u_d^{\circ}+v_d\ell$ with $u_d^{\circ}=19+2\lg d+2\lg(2d+3)+88d+10d\lg d+12d^2$ and $v_d=4d^2+2d+4$. Since $d-1-\lg d\ge0$ for every integer $d\ge1$, we have $u_d^{\circ}\le u_d:=19+2\lg d+2\lg(2d+3)+86d+8d\lg d+14d^2$. So the overhead is at most $u_d+v_d\ell$.
\item \emph{Coefficient words.} They total $(2d+3)\sum_{r\ge1}\binom dr N^r\le(2d+3)(N+1)^d\le(2d+3)2^dN^d$ bits.
\item \emph{Top level.} The symbols $\times$, $c$, $c$ and $2^d-1$ symbols $+$ cost $4\cdot2^d+8$ bits, and the constants $1/q$ and $p_0$ cost at most $8+4\lg(q+1)$ bits, using $p_0\le q$. In total at most $4\cdot2^d+16+4\lg(q+1)$.
\end{itemize}
There are $2^d-1$ blocks. Now $q+1\le\frac4\Delta+2\le(\frac1\Delta+2)^2$, so $\lg(q+1)\le2\Lambda$. Also $\ell\le N\le N^d$, and $\Lambda>2$ so any constant $c$ satisfies $c\le c\Lambda/2$. Summing,
\[
\dc(f)\le A'_dN^d+B'_d\Lambda,\qquad A'_d=2^d(4d^2+4d+7),\quad B'_d=8+\tfrac12\bigl(2^d(u_d+4)+16\bigr).
\]

\paragraph{From $N$ to $L/\Delta$.} If $qL\ge1$, then $N\le qL+1\le2qL<10L/\Delta$, because $q<5/\Delta$ for $\Delta<1$; so $N^d<10^dA$. If $qL<1$, then $N=1$ and $A'_d\le A'_d\Lambda/2$. In both cases $\dc(f)\le10^dA'_d\,A+(B'_d+A'_d/2)\Lambda$. This is the upper bound in Theorem~\ref{thm:mainD} with $\alpha_d=10^dA'_d=20^d(4d^2+4d+7)$ and $\beta_d=\max\{11,B'_d+A'_d/2\}$, the value $11$ covering $\Delta\ge\frac12$. \qed

\subsection{Remarks}

\begin{remark}[Where the dimension enters]
The lower bound counts $m^d$ disjoint bumps, so the entropy is of order $(L/\Delta)^d$, which matches the Kolmogorov--Tikhomirov estimate~\cite{KT1959}. The upper bound pays $O_d(1)$ bits per grid cell, thanks to the zero-row-sum argument of Step~3. The additive $\lg(1/\Delta)$ term does not depend on $d$.
\end{remark}

\begin{remark}[Constants]
The ratio of the upper to the lower constant is $2\cdot160^d(4d^2+4d+7)=2^{O(d)}d^2$. For growing $d$ the exponent $d$ is sharp but the constants are far from tight. None of them is optimized.
\end{remark}

\begin{remark}[Other norms]
Since $\lVert\cdot\rVert_\infty\le\lVert\cdot\rVert_2\le\sqrt d\,\lVert\cdot\rVert_\infty$, we have $\Lip^{\infty}_L\subseteq\Lip^{2}_L\subseteq\Lip^{\infty}_{L\sqrt d}$. For the Euclidean Lipschitz condition the lower bound therefore holds with $L$ unchanged, and the upper bound holds with $L$ replaced by $L\sqrt d$, which costs an extra factor $d^{d/2}$.
\end{remark}

\begin{remark}[Plain trees]
The upper bound uses the nested-loop block of $\mathcal L_d$. Without blocks, the same construction writes out the $(N+1)^d-1$ terms explicitly. Each term carries index constants and $2^s$ costing $O_d(\lg N)$ bits, so it gives $O_d\bigl((L/\Delta)^d\lg(L/\Delta+2)+\Lambda\bigr)$, and the plain-tree gap of Proposition~\ref{prop:plain} persists in every dimension. We do not claim that this cost is necessary for the construction or for the problem.
\end{remark}

\section{Numerical Verification}\label{sec:numerics}

The statements above are proved; this section checks, by running code, that the definitions are consistent, that the constructions behave as the proofs say, and that the scalings agree with the bounds. All code (Python, NumPy/SciPy) is in the supplementary archive and every table and figure below is produced by it with fixed random seeds. The numerical evidence supports the proofs but is not a substitute for them: the test functions are random or hand-built, not worst cases, and the constants of the theorems are very loose.

\subsection{What was implemented}

\begin{itemize}[itemsep=2pt]
\item A reference \emph{encoder}, an independently written \emph{decoder}, and an \emph{evaluator} for the description language of Definition~\ref{def:language} (with $d=1$) and for $\mathcal L_d$ of Section~\ref{sec:multid} (any $d$): symbol codes, Elias $\gamma$ codes, rational codes, loop blocks and templates.
\item The constructions of Theorem~\ref{thm:upper} ($d=1$) and Theorem~\ref{thm:mainD} ($d\ge1$), building the formula as a syntax tree, encoding it to a bit string, decoding it back, and evaluating the \emph{decoded} formula.
\item Test functions: sums of triangle waves with Lipschitz constant exactly $L$ (``tri''), and unions of saturating pyramids with slope exactly $L$ (``pyr'', the hard family of Lemma~\ref{lem:bumpD}), clipped to $[0,1]$.
\end{itemize}

\subsection{Encoder--decoder round trip}

Table~\ref{tab:rt} lists eight instances. In every case the decoded tree equals the encoded tree, the length of the bit string is the length of the description of the construction (hence an upper bound on its $\dc$), and the decoded formula agrees with the directly computed approximant to floating-point accuracy ($\le10^{-12}$). The resulting error is far below $\Delta$. The two length columns illustrate the effect of the loop block: for $d=1$, $\Delta=0.01$, $L=2$ the block language needs $1{,}878$ bits and the plain language $70{,}343$ bits for the same formula. Figure~\ref{fig:construction} shows one instance.

\begin{table}[t]
\centering\small
\resizebox{\linewidth}{!}{\begin{tabular}{rrrrrrrcrr}
\hline
$d$ & $\Delta$ & $L$ & $N$ & $q$ & bits (block) & bits (plain) & round trip & $\max|f_{\rm dec}-f|$ & $\|F-f\|_\infty/\Delta$ \\
\hline
1 & 0.2 & 3 & 30 & 10 & 328 & 3\,917 & yes & $4.4e-16$ & 0.185 \\
1 & 0.05 & 1 & 40 & 40 & 390 & 5\,657 & yes & $4.4e-16$ & 0.228 \\
1 & 0.01 & 2 & 400 & 200 & 1\,878 & 70\,343 & yes & $4.4e-16$ & 0.246 \\
2 & 0.3 & 1.5 & 21 & 14 & 4\,162 & 171\,553 & yes & $8.7e-14$ & 0.181 \\
2 & 0.4 & 1 & 10 & 10 & 1\,569 & 37\,436 & yes & $2.1e-14$ & 0.136 \\
2 & 0.45 & 2 & 18 & 9 & 3\,301 & 125\,654 & yes & $2.2e-13$ & 0.143 \\
3 & 0.45 & 1.2 & 11 & 9 & 17\,936 & 943\,929 & yes & $7.8e-13$ & 0.151 \\
3 & 0.49 & 1 & 9 & 9 & 11\,384 & 526\,631 & yes & $1.7e-13$ & 0.113 \\
\hline
\end{tabular}
}
\caption{Round trip encode $\to$ decode $\to$ evaluate. ``Bits (block)'' is the length of the encoded string in the language with loops, ``bits (plain)'' the length of the same formula written without loops.}\label{tab:rt}
\end{table}

\begin{figure}[t]
\centering\includegraphics[width=\linewidth]{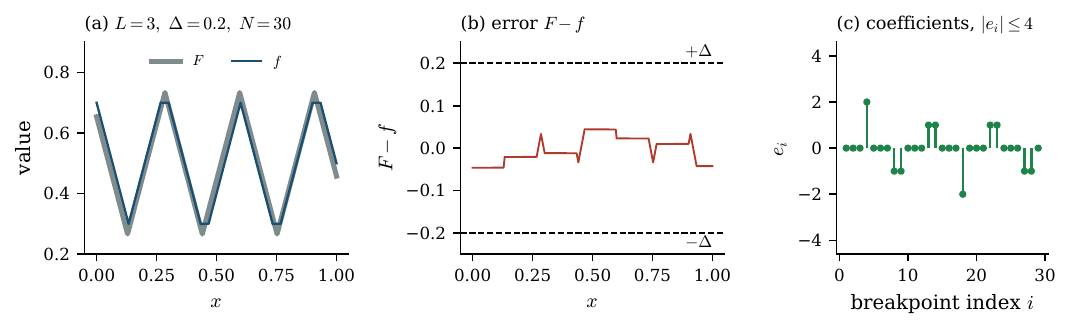}
\caption{The construction of Theorem~\ref{thm:upper} for $L=3$, $\Delta=0.2$ ($N=30$). (a) The function $F$ and the approximant $f$. (b) The uniform error stays inside $[-\Delta,\Delta]$. (c) The integer coefficients $e_i$ (second differences) take values in $\{-4,\dots,4\}$ and cost $O(1)$ bits each.}\label{fig:construction}
\end{figure}

\subsection{Accuracy and integer coefficients}

We built approximants for random instances: \nInstOne{} for $d=1$ ($\Delta\in[0.01,0.45]$, $L\in[0.5,8]$), \nInstTwo{} for $d=2$ and \nInstThree{} for $d=3$, alternating the two function types, and measured $\|F-f\|_\infty/\Delta$ on random points and on points next to the breakpoints. The largest observed ratios are \maxRatioOne{} ($d=1$), \maxRatioTwo{} ($d=2$) and \maxRatioThree{} ($d=3$), compared with the proved guarantees $2/3$, $5/8$ and $5/8$ (Figure~\ref{fig:err}). The empirical Lipschitz constants of the test functions never exceeded $L$.

The integer coefficients of Step~3 of Section~\ref{sec:upperD} stay far below the proved bound $3\cdot4^d$: the largest observed $|e_a|$ over 14 instances per dimension is \emaxOne{}, \emaxTwo{}, \emaxThree{}, \emaxFour{} for $d=1,\dots,4$, against \eboundOne{}, \eboundTwo{}, \eboundThree{}, \eboundFour{} (Figure~\ref{fig:coeffd}a). This is the property that keeps the cost per coefficient at $O(d)$ bits instead of $\lg(1/\Delta)$.

\begin{figure}[t]
\centering\includegraphics[width=\linewidth]{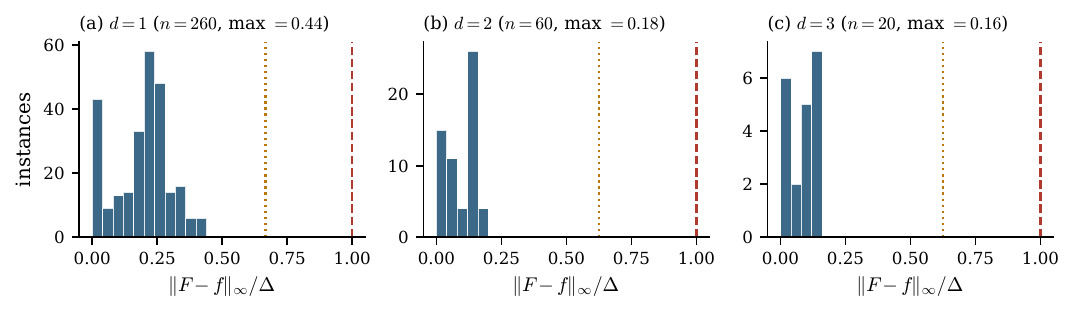}
\caption{Distribution of $\|F-f\|_\infty/\Delta$ over random instances. The dotted line is the proved worst-case ratio ($2/3$ for $d=1$, $5/8$ for $d\ge2$), the dashed line is $1$.}\label{fig:err}
\end{figure}

\subsection{Scaling of the description length}

Figure~\ref{fig:scale1} (a) plots the measured length against $L/\Delta$ for $d=1$ up to $L/\Delta\approx\maxRatioL{}$. With the loop block the length grows essentially linearly: the fitted log--log slope for $L/\Delta\ge50$ is \slopeOne{} (it tends to $1$ as the additive terms become negligible), and the length per unit of $L/\Delta$ settles near \bitsPerRatioMin{} bits, in line with $4N$ coefficient bits and $N\approx2L/\Delta$. The measured length never exceeded $\maxUbFrac$ of the proved upper bound $435\,L/\Delta+12\Lambda$, and it was at least \minLbRatio{} times the lower-bound expression $(L/\Delta+\Lambda)/16$; the latter bound is a worst-case statement over the class, so a single explicit approximant exceeding it is consistent. Without loops the same formulas have fitted slope \slopePlainOne{} and the length per unit of $L/\Delta$ keeps growing (to \plainPerRatioMax{} bits), the extra logarithmic factor of Proposition~\ref{prop:plain} (Figure~\ref{fig:scale1} (b)).

For $d=2,3$ the measured length grows roughly linearly in $A=(L/\Delta)^d$ (slopes below $1$ because the additive terms are still visible at these moderate sizes): the fitted slopes are \slopeTwo{} ($d=2$, up to $A\approx\maxATwo{}$) and \slopeThree{} ($d=3$, up to $A\approx\maxAThree{}$), and the length per unit of $A$ at the largest instance is \bitsPerATwo{} and \bitsPerAThree{} bits respectively (Figure~\ref{fig:coeffd}b); the growth of this constant with $d$ reflects the $2^d-1$ blocks and the $2d+3$ bits per coefficient, i.e.\ the exponential dependence of the constants on $d$.

\begin{figure}[t]
\centering\includegraphics[width=\linewidth]{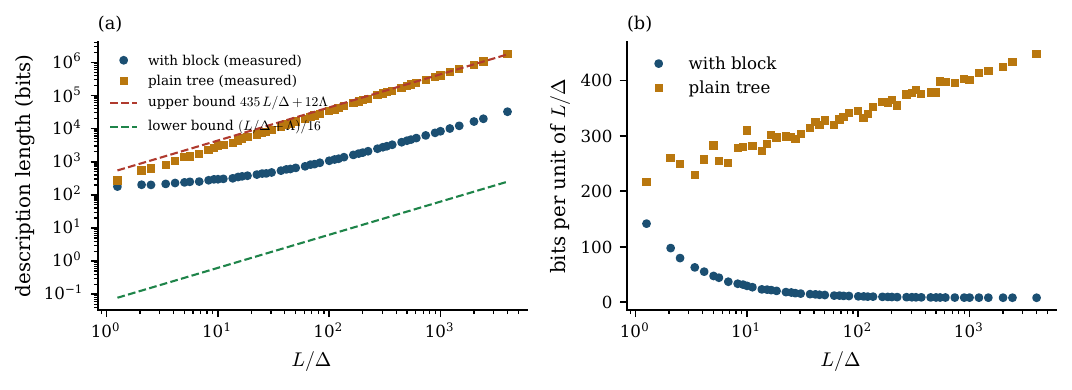}
\caption{Description length of the construction for $d=1$ (four values of $L$, thirteen values of $\Delta$ each). (a) Log--log plot against $L/\Delta$ with the proved bounds. (b) Bits per unit of $L/\Delta$: flat with the loop block, growing without it.}\label{fig:scale1}
\end{figure}

\begin{figure}[t]
\centering
\begin{minipage}[b]{0.43\linewidth}\centering
\includegraphics[width=\linewidth]{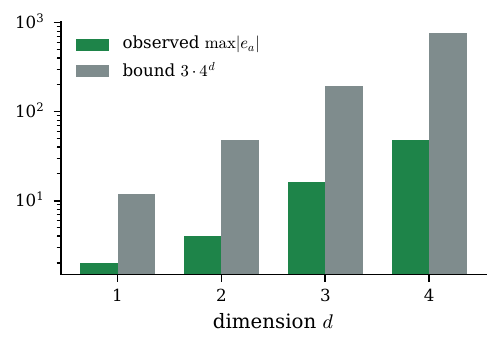}\\[-2pt](a)
\end{minipage}\hfill
\begin{minipage}[b]{0.55\linewidth}\centering
\includegraphics[width=\linewidth]{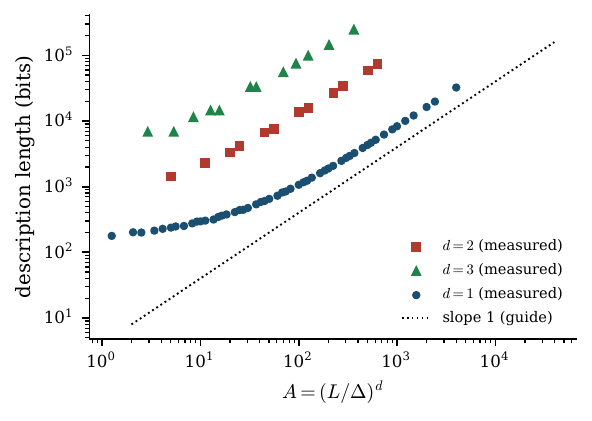}\\[-2pt](b)
\end{minipage}
\caption{(a) Largest observed integer coefficient against the bound $3\cdot4^d$. (b) Measured description length against $A=(L/\Delta)^d$ for $d=1,2,3$.}\label{fig:coeffd}
\end{figure}

\subsection{Packing family and the ramp lemma}

For the lower bound we generated the bump families of Lemma~\ref{lem:bumpD} for five choices of $(d,L,\Delta)$ and checked, on random pairs of members, that the sup-distance equals $\eta>2\Delta$ and that the empirical Lipschitz ratio does not exceed $L$ (Table~\ref{tab:pack}, Figure~\ref{fig:pack}). For the ramp of Lemma~\ref{lem:ramp}, the maximum of $\varphi(v)=v/(1+e^{v})$ is \rampMax{} at $v\approx\rampArg$, below the bound $1/3$, and over $2\times10^5$ random pairs $(z,\gamma)$ the largest observed value of $|\max\{0,z\}-R_\gamma(z)|/\gamma$ is \rampWorst{} (Figure~\ref{fig:ramp}).

\begin{table}[t]
\centering\small
\resizebox{\linewidth}{!}{\begin{tabular}{rrrrrrrrr}
\hline
$d$ & $L$ & $\Delta$ & $m$ & $\lg|\mathcal G|=m^d$ & $\eta$ & $2\Delta$ & min.\ distance & max.\ Lip.\ ratio \\
\hline
1 & 6 & 0.1 & 14 & 14 & 0.214 & 0.200 & 0.214 & 6.000 \\
1 & 20 & 0.2 & 24 & 24 & 0.417 & 0.400 & 0.417 & 20.000 \\
2 & 5 & 0.2 & 6 & 36 & 0.417 & 0.400 & 0.417 & 5.000 \\
2 & 8 & 0.3 & 6 & 36 & 0.667 & 0.600 & 0.667 & 8.000 \\
3 & 6 & 0.3 & 4 & 64 & 0.750 & 0.600 & 0.750 & 6.000 \\
\hline
\end{tabular}
}
\caption{Check of the packing family of Lemma~\ref{lem:bumpD}: $m^d$ bits are forced because the members are more than $2\Delta$ apart.}\label{tab:pack}
\end{table}

\begin{figure}[t]
\centering\includegraphics[width=\linewidth]{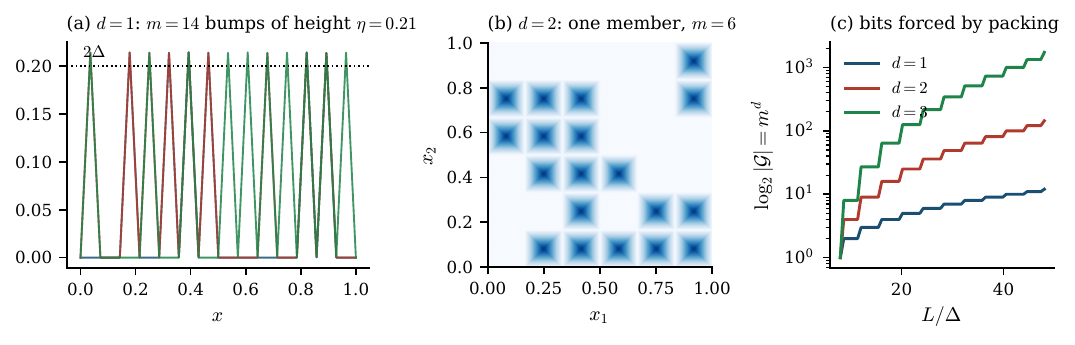}
\caption{The hard family of Lemma~\ref{lem:bumpD}. (a) Three members for $d=1$. (b) One member for $d=2$. (c) The number $m^d$ of independent bits against $L/\Delta$.}\label{fig:pack}
\end{figure}

\begin{figure}[t]
\centering\includegraphics[width=0.85\linewidth]{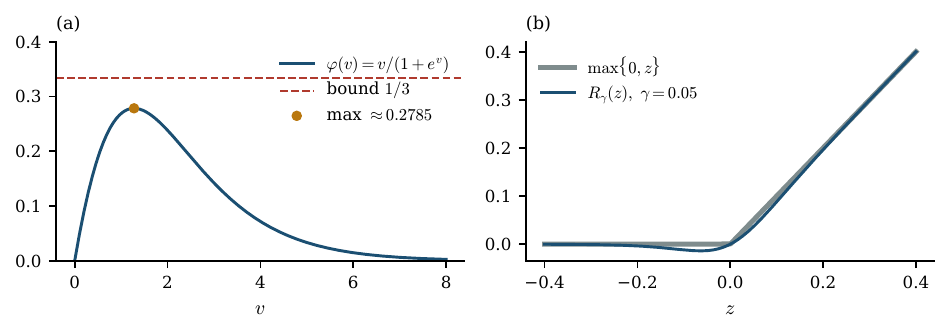}
\caption{(a) The function $\varphi$ of Lemma~\ref{lem:ramp} and its maximum. (b) The smooth ramp $R_\gamma$ against $\max\{0,z\}$.}\label{fig:ramp}
\end{figure}
\section{Discussion}\label{sec:discussion}

\subsection{What depends on the description language}

The same holds in every dimension (Section~\ref{sec:multid}).\par

The lower bounds (Theorem~\ref{thm:lower}, Proposition~\ref{prop:lowerlog}) hold for every description language, so $\Omega(L/\Delta+\lg(1/\Delta+2))$ is a property of the pair (Lipschitz class, uniform error) rather than of our encoding. The upper bound of Theorem~\ref{thm:upper} uses the repetition construct (D5). For plain trees, Proposition~\ref{prop:plain} gives only
\[
\frac1{16}\Bigl(\frac L\Delta+\Lambda_\Delta\Bigr)\le\CC_{\mathrm{plain}}(L,\Delta)\le O\Bigl(\frac L\Delta\lg\Bigl(\frac L\Delta+2\Bigr)+\Lambda_\Delta\Bigr)
\]
for $\Delta\le1/36$. A plain tree cannot share the information in the regular breakpoints $i/N$, and it is open whether other constructions avoid the extra factor.

\subsection{Programmatic complexity}

One may ask instead how long a \emph{program} must be to print a good formula. Fix a universal Turing machine, and let $\dc_{\mathrm{prog}}(F,\Delta)$ be the length of the shortest stand-alone program (with no access to $F$) that halts and prints a description of an admissible $f$ with $\norm{F-f}\le\Delta$.

\begin{proposition}[Worst case]\label{prop:prog}
For $0<\Delta\le1/36$ and $L>0$,
\[
\frac1{16}\Bigl(\frac L\Delta+\Lambda_\Delta\Bigr)\ \le\ \sup_{F\in\Lip_L}\dc_{\mathrm{prog}}(F,\Delta)\ \le\ 435\,\frac L\Delta+12\,\Lambda_\Delta+c_U,
\]
where $c_U$ depends only on the machine.
\end{proposition}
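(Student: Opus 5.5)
The two inequalities are proved separately, and both reduce to results already established for $\dc$.

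\emph{Upper bound.} Fix $F\in\Lip_L$ and let $w_F$ be a shortest description of the admissible formula $f$ given by Theorem~\ref{thm:upper}, so $|w_F|\le 435\,L/\Delta+12\Lambda_\Delta$ and $\norm{F-f}\le\Delta$. Let $P_0$ be a fixed program that copies a literal bit string, given to it as its final segment, to the output and halts. Concatenating $P_0$ with $w_F$ in a self-delimiting way gives a program of length $|w_F|+c_U$ that prints a description of $f$. Since $w_F$ is a valid description, it is already self-delimiting: $\Dec$ parses uniquely and the productions determine where the string ends. So $P_0$ can find the end of $w_F$ by running the parser of Definition~\ref{def:language}, and no length prefix is needed. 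This is the one point where care is required: if we had to prefix the length $|w_F|$, we would pay an extra $O(\lg(L/\Delta))$ bits, which would spoil the clean additive constant. The parsing trick avoids this, so $c_U$ is just the length of $P_0$ together with the universal machine's overhead. The program has no access to $F$, but it need not have any, because the literal string encodes everything.

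\emph{Lower bound.} This uses the counting argument. Consider the partial map $\Dec'$ that sends a bit string $p$ to $\Dec(U(p))$ whenever $U$ halts on the program $p$ and its output is a valid description. $\Dec'$ is a partial map from bit strings to formulas. Lemma~\ref{lem:count} and Lemma~\ref{lem:pigeon} hold for \emph{every} such map, as Remark~\ref{rem:language} stresses, and therefore so do Theorem~\ref{thm:lower}, Proposition~\ref{prop:lowerlog} and Corollary~\ref{cor:theta}, whose proofs use nothing else. For each $F$, the quantity $\dc_{\mathrm{prog}}(F,\Delta)$ is exactly $\min\{|p|:\norm{F-\Dec'(p)}\le\Delta\}$. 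Rerunning the proof of Corollary~\ref{cor:theta} with $\Dec'$ in place of $\Dec$ therefore gives the lower bound $\frac1{16}(L/\Delta+\Lambda_\Delta)$. This covers both the bump family for $L\ge8\Delta$ and the constant family for the additive term.

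There is one subtlety to check. The packing argument needs distinct members of the family to receive distinct programs. This holds because any two members are more than $2\Delta$ apart, so no single output formula is within $\Delta$ of both. It is not affected by the possibility that a program prints a description with an inadmissible formula, since such an output has distance $+\infty$ and helps no member of the family. I expect no real obstacle in the lower bound. The only step needing care is the self-delimiting concatenation in the upper bound.
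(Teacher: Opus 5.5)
Your proposal is correct and follows essentially the same route as the paper. The lower bound reruns the counting and packing argument (Lemmas~\ref{lem:count} and~\ref{lem:pigeon}) with programs in place of descriptions, and the upper bound prints the fixed description from Theorem~\ref{thm:upper} at additive cost $c_U$; your observation that descriptions form a prefix-free set, so the printing program needs no length prefix, is a correct detail that the paper leaves implicit.
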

\begin{proof}
At most $2^{M+1}-1$ programs have length $\le M$, and each prints at most one description, hence at most one formula. Lemma~\ref{lem:pigeon} and the proofs of Theorem~\ref{thm:lower}, Proposition~\ref{prop:lowerlog} and Corollary~\ref{cor:theta} use only that fewer than $2^{M+1}$ candidate formulas have cost $\le M$, so they go through with ``program'' in place of ``description''. For the upper bound, a program that prints a fixed description $w$ has length $|w|+c_U$.
\end{proof}

A program that \emph{computes} its constants rather than listing them therefore gains nothing in the worst case: there are too few programs. Gains are possible only for individual functions. Two relative variants are of interest.
\begin{itemize}[itemsep=2pt]
\item \emph{Description model.} If the program is given a machine $M_F$ computing $F$, the loop of Step~7 yields length $K(M_F)+O(\lg(L/\Delta+2)+\Lambda_\Delta)$ (using finite-precision evaluation with $q=\lceil4/\Delta\rceil$, which only changes constants). This is small only for functions of low Kolmogorov complexity.
\item \emph{Oracle model.} If the program may query $(x,q)\mapsto\lfloor qF(x)\rceil$, its length is $O(\lg(L/\Delta+2)+\Lambda_\Delta)$ independent of $F$. Counting does not apply here, because the oracle supplies unboundedly many bits, so this quantity does not measure the complexity of $F$.
\end{itemize}

\subsection{Limitations}

\begin{itemize}[itemsep=2pt]
\item The main sections treat one variable; Section~\ref{sec:multid} extends the Lipschitz results to $d$ variables with $d$-dependent constants. Other function classes are not treated. For values in $[0,B]$, apply the results to $F/B$ with $\Delta/B$ and $L/B$, and rescale the output.
\item The constants ($435$, $12$, $1/16$, $1/36$) are not optimized. Only the orders of magnitude are claimed.
\item The upper bound is existential given the values $F(i/N)$. We do not analyse the cost of computing the description from $F$.
\item $\dc$ is relative to the language of Definition~\ref{def:language}. The lower bounds are language independent, the upper bounds are not (Remark~\ref{rem:language}).
\item Error is measured only for admissible formulas (Definition~\ref{def:formula}). The lower-bound argument counts all formulas, so it is insensitive to this convention.
\item The hard functions of Lemma~\ref{lem:bump} are explicit but not natural. Typical-case behaviour is not addressed.
\end{itemize}

\subsection{Open problems}\label{sec:open}

\begin{enumerate}[itemsep=2pt]
\item \emph{Plain trees.} Is $\CC_{\mathrm{plain}}(L,\Delta)=\Theta(L/\Delta)$ for $L\ge1$, or does a lower bound of order $(L/\Delta)\lg(L/\Delta)$ hold?
\item \emph{Sharp constants} in Theorems~\ref{thm:upper} and~\ref{thm:lower}.
\item \emph{Node complexity.} What are the bounds in terms of $|f|$ when constants are free? Pseudo-dimension bounds for Pfaffian formulas~\cite{KM1997} are a natural tool.
\item \emph{Instance-wise program length.} The worst case matches (Proposition~\ref{prop:prog}). For an individual $F$, how does $\dc_{\mathrm{prog}}(F,\Delta)$ relate to the Kolmogorov complexity of $F$ and to $\Delta$?
\item \emph{Smoothness classes.} For H\"older or Sobolev balls of smoothness $r$ in dimension $d$ the entropy is of order $(L/\Delta)^{d/r}$~\cite{KT1959}. We expect $\Theta(\Delta^{-d/r})$ bits with local Taylor polynomials and a smooth partition of unity, but have not proved it. Bounded-variation classes, other bases and restricted bases are also open.
\item \emph{Average case.} The typical complexity of a random Lipschitz function.
\end{enumerate}

\subsection{Relation to approximation by neural networks}

Yarotsky and Zhevnerchuk~\cite{YZ2020} study how the error of ReLU and periodic-activation networks scales with the number of weights. Their accounting of information (their Table~1) is that the total information needed to approximate a smoothness-$r$ ball in dimension $d$ to accuracy $\epsilon$ is always of order $\epsilon^{-d/r}$, and that networks differ only in how this is split between the number of weights and the number of bits per weight. The description complexity $\dc$ charges exactly this total, so the lower bounds of Section~\ref{sec:lower} and Section~\ref{sec:multid} are the Boolean-complexity counterpart of that entropy count. The superexpressive activations of~\cite{Y2021} exploit unbounded precision of two real weights; the bit-counting measure $\dc$ rules this out by construction. The basis $\{+,-,\times,\div,\exp\}$ consists of Pfaffian functions, so Khovanskii-type bounds~\cite{Khovanskii1991,KM1997}, as used in~\cite{Y2021} to show that standard activations are not superexpressive, would be the tool for the open problem on node complexity.

\section{AI Disclosure Statement}\label{sec:ai}

This statement follows the Leiden Declaration~\cite{Leiden} (2 June 2026, endorsed by the International Mathematical Union). Full responsibility for the correctness of all results lies with the author. No AI system is an author of this paper.

\paragraph{Role of AI (large language model: Claude, Anthropic).}
\begin{itemize}[itemsep=2pt]
\item Typesetting and academic formatting of the manuscript, and layout of the presentation slides.
\item Simulation of peer review and adversarial questioning of earlier drafts.
\item Suggestions for the technical presentation of the proofs, including rewriting of statements and proofs in response to the simulated review (in particular the corrected lower-bound argument and the precise definition of description complexity).
\item Assistance in drafting the text of the multidimensional extension (Section~\ref{sec:multid}) and in formalizing its tensor-product construction; the extension idea and its mathematical validation are the author's.
\item Writing the code for the numerical verification and producing the figures and tables of Section~\ref{sec:numerics}.
\end{itemize}

\paragraph{Contribution of the author.}
\begin{itemize}[itemsep=2pt]
\item Derivation of the key mathematical formulas (polished with the help of AI).
\item Manual verification of the rigorous proofs.
\item Development of the original theoretical concept, including the multidimensional extension.
\item Final approval of all results.
\end{itemize}

\section{Conclusion}

For all $L>0$ and $\Delta\le1/36$, the worst-case number of bits needed to approximate an $L$-Lipschitz function on $[0,1]$ to uniform error $\Delta$ by an elementary expression over $\{+,-,\times,\div,\exp,x\}$ with rational constants is $\Theta\bigl(L/\Delta+\lg(1/\Delta+2)\bigr)$, and on $[0,1]^d$ it is $\Theta_d\bigl((L/\Delta)^d+\lg(1/\Delta+2)\bigr)$. The upper bounds use a smoothed spline in a description language with repetition constructs, and the lower bounds are counting arguments valid in every description language. A reference implementation confirms the constructions and the predicted scalings. For plain expression trees without repetition, the upper bounds carry an extra factor $\lg(L/\Delta)$, and closing that gap is the main open question.

\end{document}